\newtheorem{lemma}     {Lemma}[section]
\newtheorem{teorema1}   [lemma]{Theorem}
\newtheorem{prop}      [lemma]{Proposition}
\newtheorem{assum} [lemma]{Assumption}
\newtheorem{cong1}      [lemma]{Conjecture}
\newtheorem{remark1}    [lemma]{Remark}
\newtheorem{defin}     [lemma]{Definition}
\newenvironment{Theorem}[1][]
{\begin{teorema1}[#1]\begin{samepage}}{\end{samepage}\end{teorema1}}
\newenvironment{Remark}[1][]
   {\begin{remark1}[#1]\begin{samepage}}{\end{samepage}\end{remark1}}
\numberwithin{equation}{section}
\renewcommand{\a}{\alpha}
\newcommand{\dis}{\displaystyle}
\renewcommand{\L}{\mathcal{L}}
\newcommand{\mmmintone}[1]{{\dis{\int\kern -.43cm
-}}_{\kern-.21cm\substack{#1}}\;\;}
\newcommand{\mmmintwo}[2]{{\dis{\int\kern -.43cm
-}}_{\kern-.21cm\substack{#1}}^{\substack{#2}}\;\;}
\newcommand{\submint}{{\scriptstyle{\int\kern -.66em -}}}
\newcommand{\submintone}[1]{{\scriptstyle{\int\kern -.66em
-}}_{\scriptscriptstyle{\kern-.21em\substack{#1}}}}
\newcommand{\fracmint}{{\textstyle{\int\kern -.88em -}}}
\newcommand{\fracmintone}[1]{{\textstyle{\int\kern -.88em
-}}_{\scriptscriptstyle{\kern-.21em\substack{#1}}}\;}
\newcommand{\nada}[1]{}
\renewcommand{\L}{\mathcal{L}}
\DeclareFontFamily{U}{BOONDOX-calo}{\skewchar\font=45 }
\DeclareFontShape{U}{BOONDOX-calo}{m}{n}{
<-> s*[1.05] BOONDOX-r-calo}{}
\DeclareFontShape{U}{BOONDOX-calo}{b}{n}{
<-> s*[1.05] BOONDOX-b-calo}{}
\DeclareMathAlphabet{\mathcalboondox}{U}{BOONDOX-calo}{m}{n}
\SetMathAlphabet{\mathcalboondox}{bold}{U}{BOONDOX-calo}{b}{n}
\DeclareMathAlphabet{\mathbcalboondox}{U}{BOONDOX-calo}{b}{n}
\newcommand{\mcb}[1]{{\mathcalboondox #1}}
\title{Hydrodynamic limit for some gradient and attractive spin models}
\date{}
\author[1]{Chiara Franceschini}
\author[2]{Patr\'{i}cia Gon\c{c}alves}
\author[3]{Kohei Hayashi}
\author[4]{Makiko Sasada}
\affil[1]{Dipartimento di Scienze Fisiche, Informatiche e Matematiche, Universit\`{a} degli Studi di Modena e
Reggio Emilia}
\affil[2]{Departamento de Matem\'atica, Instituto Superior T\'ecnico, Universidade de Lisboa}
\affil[3]{Department of Mathematics, Graduate School of Science, The University of Osaka}
\affil[3]{RIKEN Center for Interdisciplinary Theoretical and Mathematical Sciences} 
\affil[4]{Graduate School of Mathematical Sciences, The University of Tokyo}
\begin{document}

\maketitle
    
\begin{abstract}
We study the hydrodynamic limit for three gradient spin models: generalized Kipnis-Marchioro-Presutti (KMP), its discrete version and a family of harmonic models, under symmetric and nearest-neighbor interactions. These three models share some universal properties: occupation variables are unbounded, all these processes are of gradient type, their invariant measures are product with spatially homogeneous weights, and, notably, they are all attractive, meaning that the process preserves the partial order of measures along the dynamics. In view of hydrodynamics of large-scale interacting systems, dealing with processes taking values in unbounded configuration spaces is known to be a challenging problem. In the present paper, we show the hydrodynamic limit for all three models listed above in a comprehensive way, and show as a main result, that, under the diffusive time scaling, the hydrodynamic equation is given by the heat equation with model-dependent diffusion coefficient. Our novelty is showing the attractiveness for each model, which is crucial for the proof of hydrodynamics. 
\end{abstract}

\section{Introduction}
In this article, we consider different microscopic spin models that share some universal properties that allow us to derive in a similar fashion the hydrodynamic limit for all of them. These models are energy models, but also particle models, which we describe as follows.

\subsection{The microscopic models}
The first model we analyze is the  Kipnis, Marchioro and Presutti (KMP) introduced in 1982 as a stochastic model to study heat conductivity in a one-dimensional lattice system \cite{kipnis1982heat}. 
In this article, we study a generalized version of the KMP model proposed in \cite{carinci2013duality} where energy is redistributed according to a Gamma distribution of parameters $(2\mathfrak s,2\mathfrak s)$, with $\mathfrak s>0$. 
In their original paper \cite{kipnis1982heat}, they also introduced, via a duality relation, a different model of interacting discrete particles, that we refer to as discrete KMP. Here particles are redistributed between two neighboring sites according to a discrete uniform distribution. Lastly, we also consider a family of models recently introduced in \cite{frassek2020non}, which are referred to as Harmonic models. This family, also labeled by spin $\mathfrak s$, is different from the KMP-type models described above since particles perform zero-range jumps, namely the rate only depends on the occupation variable on the departure site.  However, unlike classical zero-range processes, more than one particle may jump at a time from a site to another one.
We note that all the models considered here have nearest-neighbor interactions, they are  of gradient type and with random symmetric jump probability rates. 
The gradient property of the models translates by saying that the instantaneous current of the systems writes as the gradient of some local function of the dynamics.  However, it is clear that the microscopic behavior of these models posses different features, whereas the proof we present, which does not depend on the specific type of model, is quite general.

\subsection{The microscopic space}
For simplicity of the presentation, we study the models in a simple setting, a discrete one-dimensional torus with periodic boundary conditions with $N$ sites. In fact, our proof applied to any $d$-dimensional torus, and we leave the details of this extension to the interested reader. The fact of working under periodic boundary conditions allows to have several powerful features which we now describe. Nevertheless, we leave as a future work the study of these models in a boundary-driven context as well as under long range interactions, in which case the hydrodynamic equations are of fractional type.

\subsection{Properties of the models}
In our setup, one can easily check that the invariant measure in all the models is a homogeneous product measure whose marginals are given explicitly below. Moreover, the dynamics conserves the quantity of interest of each model: either the total energy for continuous KMP models, or the total number of particles for discrete KMP and Harmonic models. Our proof for the hydrodynamic limit relies on this conservation law and on attractiveness, a powerful property to study Markov processes, see \cite[Section 11.2]{liggett1985interacting}. Attractiveness guarantees a sort of monotonicity in the context of particle systems, where the partial order of configurations is preserved over time.
On the other hand, unlike exclusion-type processes, the invariant measures do not have super-exponential tails and the occupation variables are unbounded. This unboundedness is usually quite difficult to overcome for the proof of scaling limits of interacting particle systems, and here we handle it by using heavily the conservation law and the attractiveness property present in all the models. The drawback is to force the models to initiate from measures that are stochastically dominated by the invariant measure; see Assumption \ref{assum:initial_measure}.

\subsection{The hydrodynamic limit}
Since its development in the 90's the theory of hydrodynamic limit for particle systems is quite rich, we mention below some works with a similar statement to ours. In \cite{rezakhanlou1991hydrodynamic} Rezakhanlou proved the hydrodynamic limit for exclusion and zero-range type processes on infinite volume relying on attractiveness. 
In \cite{suzuki1993hydrodynamic}, hydrodynamics is proved for an energy model with a similar redistribution rule of KMP, see also \cite{feng1997microscopic}.
In particular, their redistribution rule coincides with that of Section \ref{cts model} when $D=4\mathfrak s$ in their notation. However, the rates for energy redistribution are different and they found a diffusion equation that reduces to the porous medium equation when $ D=4\mathfrak s$. 
The symmetric inclusion process also belongs to the same universality class, but it is not attractive and its hydrodynamic limit was studied in \cite{franceschini2022symmetric} for an open finite chain. Additionally, the proof in that case heavily relies on the dual purely absorbing process.

\subsection{Our contribution}
In  a nutshell our contribution is this article is twofold. First, we prove that all our models satisfy the gradient condition. As we mentioned above this means that the current of the systems writes as the gradient of some local function which in all our models is written in terms of the energy or particle configuration. This gives already an intuition that the hydrodynamic equation is indeed the heat equation. 
Second, we also prove that all our models are attractive. Depending on the model, this is done in two different ways: either by showing that the basic coupling preserves the monotonicity of the model or by employing a criterion (Theorem 2.9 of \cite{gobron2010couplings}) on the jump rates that immediately implies that monotonicity property. At this level we need to make a restriction on the spin $\mathfrak s$ of the Harmonic models, since we can prove that they are attractive under the restriction $\mathfrak s\ge 1/2$ and we leave the proof in the remaining cases open. 
With these two ingredients in hands, we apply the powerful entropy method developed by \cite{guo1988nonlinear} but we assume that our starting measures are stochastically dominated by an invariant measure (that we specify below for each model).

\subsection{Future problems}
We leave as a future problem the derivation of the hydrodynamic limit for all the models starting from more general measures, i.e. without assuming the attractiveness nor the stochastic domination of the initial measures. A natural question that follows is the analysis of the non-equilibrium fluctuations for these models, i.e. the description of the  stochastic fluctuations of the random microscopic dynamics around the hydrodynamic behavior. All these questions can be posed when the system is boundary driven for which the proof will technically be much more involved, since the conservation law is destroyed.  Another problem is the derivation of the {dynamical} large deviations principle for all these models. Since our proof is based on the entropy method, we have now paved the way to natural look at our results at the level of large deviations. 
Here follows and outline of the article.

\subsection{Outline of the article}
The rest of the paper is organized as follows: in Section \ref{models} we introduced the three families of models and in Theorem \ref{HDL} we state the hydrodynamic limit, our main result. 
We dedicate Section \ref{attra} to show the attractiveness of the models. Lastly, in Section \ref{prooof} we prove our main result; first, by showing tightness of the process of empirical measures and then we characterize the limit points of the tight sequence and show that such limit is concentrated on trajectories of measures that are absolute continuous with respect to the Lebesgue measure and whose density is the weak solution of the heat equation with a diffusion coefficient that depends on the chosen model.

\section{The models and statement of the result}
\label{models}
{Throughout the present paper, set $\mathbb N=\{1,2,\ldots,\}$ and $\mathbb N_0=\{0,1,\ldots\}$. }
Let $N \in \mathbb{N}$ and denote by $\mathbb{T}_{N} = \mathbb R/\mathbb Z \cong \lbrace 0, 1, \ldots, N-1 \rbrace$ the discrete one-dimensional torus with $N$ sites and by $\mathbb{T} \cong [0,1)$ the corresponding macroscopic one-dimensional torus. The energy models of interest have state space given by $ \mathbb{R}_+^{\mathbb{T}_N}$, of which elements are configurations of energy; whereas the configuration space of particle models is given by $\mathbb{N}_0^{\mathbb{T}_N}$ whose elements are referred to as particle configurations. 
Elements of each model are denoted by Greek letters $\eta$.
In the first case, the quantity of energy at site $x \in \mathbb{T}_{N}$ is denoted by $\eta_x \in \mathbb{R}_+$, whereas for the letter $\eta_x \in \mathbb{N}_0$ represents the number of particles at site $x \in \mathbb{T}_{N}$.
In what follows, we describe the dynamics of three spin models that share similar features. In particular, in all these models there is a quantity which is conserved by the dynamics, either the total energy or the total number of particles. Moreover, for all these models there is a one-parameter family of product probability measures, which are reversible, where the parameter represents, up to a constant, the average energy or particle density per site.

\subsection{Energy models} \label{cts model}
We study a family of models labeled by the parameter $\mathfrak s>0$ where energy is exchanged in a random way among nearest-neighbor sites. 

\subsubsection{Generalized Kipnis-Marchioro-Presutti model}
Let us consider a Markov process on the configuration space $\Omega_N^{\mathrm{gKMP}}=\mathbb R_+^{\mathbb T_N}$. 
The infinitesimal evolution of the process is encoded in the Markov generator acting of functions $f: \Omega_N^{\mathrm{gKMP}} \to \mathbb{R}$ as
\begin{equation}
\label{gkmpgen}
L_N^{\mathrm{gKMP}} f (\eta)
= \sum_{x \in \mathbb{T}_N} L_{x,x+1}^{\mathrm{gKMP}} f (\eta) 
\end{equation}
where we define
\begin{equation*}
L_{x,x+1}^{\mathrm{gKMP}} f (\eta) 
= \int_0^1  \gamma_{\mathfrak s}(u)  
\big[f(\eta^{x,x+1,u}) -f(\eta)\big] du.
\end{equation*}
The above dynamics corresponds to redistributing energy between two nearest neighboring sites with rate $1$ so that the ratio of the new energy follows a Beta distribution with both parameters $2\mathfrak s > 0$, i.e., 
\begin{equation}
\label{eq:dkmp_redistribution_weight}
\gamma_{\mathfrak s}(u) = \frac{\Gamma(4\mathfrak s)}{\Gamma(2\mathfrak s)^2} u^{2\mathfrak s-1} (1-u)^{2\mathfrak s-1}
\end{equation}
where $\Gamma(z)=\int_0^\infty y^{z-1}e^{-y}dy$ denotes the Gamma function, and for $u\in[0,1]$  and $\eta \in\Omega_N^{\textrm{gKMP}}$ the new configuration is defined by 
\begin{eqnarray*}
(\eta^{x,y,u})_z = 
\begin{cases}
\begin{aligned}
& u(\eta_x + \eta_y)  && \text{ if } z=x, \\
& (1-u)(\eta_x + \eta_y)  && \text{ if } z=y, \\
& \eta_z  && \text{ otherwise. }  \\
\end{aligned}
\end{cases}
\end{eqnarray*}
It will be convenient to split the action of the generator as
$L_{x,x+1}^{\mathrm{gKMP}} = \nabla_{x,x+1}^{\mathrm{gKMP}} + \nabla_{x+1,x}^{\mathrm{gKMP}}$, where $\nabla_{x,x+1}^{\mathrm{gKMP}}$ is the energy transfer from site $x$ to site $x+1$ and $\nabla_{x+1,x}^{\mathrm{gKMP}}$ is that from site $x+1$ to site $x$: 
\begin{align*}
\nabla_{x,x+1}^{\mathrm{gKMP}} f(\eta)&= \frac{\Gamma(4\mathfrak s)}{\Gamma(2\mathfrak s)^2} \int_{0}^{\eta_x}   \dfrac{(\eta_x - \alpha)^{2\mathfrak s-1}(\eta_{x+1} + \alpha)^{2\mathfrak s-1}}{(\eta_x + \eta_{x+1})^{4\mathfrak s-1}} \left[ f(\eta - \alpha \delta_x + \alpha \delta_{x+1} ) - f(\eta) \right] d\alpha \\
\nabla_{x+1,x}^{\mathrm{gKMP}} f(\eta) 
&=  \frac{\Gamma(4\mathfrak s)}{\Gamma(2\mathfrak s)^2} \int_{0}^{\eta_{x+1}}  \dfrac{(\eta_x + \alpha)^{2\mathfrak s-1}(\eta_{x+1} - \alpha)^{2\mathfrak s-1}}{(\eta_x + \eta_{x+1})^{4\mathfrak s-1}} \left[ f(\eta + \alpha \delta_x - \alpha \delta_{x+1} ) - f(\eta) \right] d\alpha
\end{align*}
where $\delta_x$ denotes the configuration with unitary energy at site $x\in \mathbb T_N$ and zero elsewhere. 
{Here, note that we can easily extend the definition of the model with a long-range interaction rather than focusing on the nearest-neighbor case. This remark goes through with the other models that we will present hereinafter. }

\begin{Remark}[KMP model]
The choice $\mathfrak s=1/2$ leads to the classical model \cite{kipnis1982heat} where energy is redistributed according to a $Beta(1,1)$, that is, a continuous uniform distribution in $[0,1]$.
\end{Remark}

It is not hard to show that these models have a product invariant measure that we denote by $\nu_\rho^{\mathrm{gKMP}}$ which is also reversible under the dynamics, that is, the generator $L_N$ is self-adjoint with respect to a Gamma distribution with shape parameter $2\mathfrak s > 0$ and free scale parameter $\rho > 0$, where the common marginal of the measure $\nu_\rho$ satisfies
\begin{equation*}
\nu_{\rho}^{\mathrm{gKMP}}(\eta_x\le z) 
= \frac{1}{\Gamma(2\mathfrak s) \rho^{2\mathfrak s}} \int_0^z 
\eta^{2\mathfrak s-1} e^{-\eta / \rho } d\eta 
\end{equation*}
for any $x\in\mathbb T_N$ and $z\ge 0$. 
Note that, with respect to this parametrization, all the moments are given by 
\begin{equation}
\label{momentsKMP}
E_{\nu_\rho^{\mathrm{gKMP}}} \big[ (\eta_x^{\mathrm{gKMP}})^m \big] 
= \rho^m \dfrac{\Gamma(2\mathfrak s + m)}{\Gamma(2\mathfrak s)} 
\end{equation}
for $m\in\mathbb N_0$, where $E_{\nu_\rho^{\mathrm{gKMP}}}[\cdot]$ denotes the expectation with respect to the measure $\nu_\rho^{\mathrm{gKMP}}$ and we use the same notation for the other models.  

\begin{Remark}[Continuous Harmonic models] Another family of energy redistribution models has been recently studied in \cite{franceschini2023integrable}. These models represent the continuous counterparts of the Harmonic models described below in Subsection \ref{subsec:harm} and share a duality relationship with them. Since the Generalized KMP models and the continuous Harmonic models have identical transport coefficients — namely, constant diffusivity and convex mobility — it is believed that they exhibit similar macroscopic behavior. However, we defer the detailed study of the hydrodynamic limit for the continuous Harmonic models to future work, as their definition remains formal and the existence of the associated stochastic process still needs to be proved. 
\end{Remark}

\subsection{Particle models} 
\label{dsc model}
In this section, we introduce the two particle models of interest. Both can be considered special cases of mass migration processes \cite{fajfrova2016invariant} where multiple particles are allowed to jump together at the same time. The first one is the dual of the KMP model and the second one is a generalized zero-range process.

\subsubsection{Discrete KMP model}
The discrete KMP model appeared for the first time in \cite{kipnis1982heat} as the dual particle model of the KMP. Here particles between two nearest-neighbor sites are redistributed according to a discrete uniform distribution with rate $1$, namely the process has the  Markov generator
\begin{equation} 
\label{dkmpgen}
    L_N^{\mathrm{dKMP}} f(\eta) = \sum_{x \in \mathbb{T}_N} L_{x,x+1}^{\mathrm{dKMP}} f(\eta)
\end{equation}
where
\begin{equation*}
 L_{x,x+1}^{\mathrm{dKMP}} f(\eta) =  \frac{1}{ \eta_{x} + \eta_{x+1} +1 } \sum_{r=0}^{\eta_{x} + \eta_{x+1}}
\big[f(\eta_0, \ldots, \eta_{x-1}, r, \eta_x + \eta_{x+1} -r,\ldots \eta_{N-1}) - f(\eta)\big] 
\end{equation*}
for any $f:\Omega_N^{\mathrm{dKMP}}\to\mathbb R$, where $\Omega_N^{\mathrm{dKMP}}=\mathbb N_0^{\mathbb T_N}$.
As before, it will be convenient to split the action of the generator as $L_{x,x+1}^{\mathrm{dKMP}} = \nabla_{x,x+1}^{\mathrm{dKMP}} +\nabla_{x+1, x}^{\mathrm{dKMP}}$ where $\nabla_{x,x+1}^{\mathrm{dKMP}}$ and $\nabla_{x+1,x}^{\mathrm{dKMP}}$ are defined as follows:
\begin{equation*}
\nabla_{x,x+1}^{\mathrm{dKMP}} f (\eta) 
=  \frac{1}{ \eta_{x} + \eta_{x+1} +1 }  \sum_{k=1}^{\eta_x}  
\big[ f(\eta -k \delta_x + k \delta_{x+1}) - f(\eta)\big] 
\end{equation*}
and 
\begin{equation*}
\nabla_{x+1, x}^{\mathrm{dKMP}} f (\eta) 
 =  \dfrac{1}{  \eta_{x} + \eta_{x+1} +1 } \sum_{k=1}^{\eta_{x+1}} 
 \big[ f(\eta +k \delta_x - k \delta_{x+1}) - f(\eta)\big] \;,
\end{equation*}
where we denote by $\delta_x$ the configuration with just one particle in site $x \in \mathbb{T}_N$.
In other words, $k$ particles jump from site $x$ to site $y$, in general, at rate
\begin{equation}
\label{eq:dKMP_rates}
c^k_{x,y}(\eta)
= \dfrac{1}{ \eta_{x}+\eta_{y}+1 } 
\mathbf{1}_{\lbrace k \le \eta_x \rbrace} ,
\end{equation} 
noting the the long-range interaction can be defined analogously.

\begin{Remark}
In principle, one can define a family of models labeled by the spin $\mathfrak s>0$: these models were first obtained via a thermalization limit of the Symmetric Inclusion Process, see Section 5 of \cite{carinci2013duality}. In this case particles between two neighboring sites are redistributed according to a negative hypergeometric distribution and the jump rate of $k$ particles from site $x$ to site $y$ is given by
\begin{equation*}
c^k_{x,y}(\eta) 
= \dfrac{\Gamma(4\mathfrak s)}{\Gamma(2\mathfrak s)^2} \dfrac{\Gamma(2\mathfrak s+\eta_x-k) \Gamma(2\mathfrak s + \eta_{y} +k) \Gamma(\eta_x + \eta_{y} +1) }{  \Gamma(1+\eta_x-k) \Gamma(1+\eta_{y}+k) \Gamma(4\mathfrak s + \eta_{x} + \eta_{y}) } 
\mathbf{1}_{\{ k \le \eta_x\} }  .
\end{equation*}
Fixing $\mathfrak s=1/2$, the classical model in \eqref{dkmpgen} is recovered. 
\end{Remark}

It is not hard to show that the discrete KMP has a product invariant measure $\nu_\rho^{\mathrm{dKMP}}$ on $\Omega_N^{\mathrm{dKMP}}$ which is also reversible and its common marginal is given by the geometric distribution with mean $\rho>0$: 
\begin{equation*}
\nu_{\rho}^{\mathrm{dKMP}}(\eta_x=k) 
= \rho^{-1}
(1+1/\rho)^{-k-1} 
\end{equation*}
for each $x\in\mathbb T_N$ and $k\in \mathbb N_0$. 
Then, the generator $L_N$ is self-adjoint with respect to the measure $\nu_\rho^{\mathrm{dKMP}}$.
Note that, with respect to this parametrization, all the factorial moments are given by 
\begin{equation}
\label{momentsdKMP}
E_{\nu_\rho^{\mathrm{dKMP}}}\Big[
\prod_{k=1}^m(\eta_{x}-k+1)\Big]
= m! \rho^m 
\end{equation}
for each $m\in\mathbb N_0$.

\subsubsection{Harmonic model}\label{subsec:harm}
Finally, let us introduce the harmonic model on the particle-type configuration space $\Omega_N^{\mathrm{Harm}}=\mathbb N_0^{\mathbb T_N}$. 
Harmonic models are a family of interacting particle systems of generalized zero-range type labeled by the spin $\mathfrak s>0$. They were recently introduced in \cite{frassek2020non} using integrable non-compact quantum spin chains, see also \cite{capanna2024class} for the spin $\mathfrak s=1/2$ case. In this article, we restrict ourselves to the spin range $\mathfrak s \geq 1/2$ for technical reasons, since we need to use in the proof of our main theorem the fact that the process is attractive, which is proved in \cref{attract-dkmp} when $\mathfrak s \geq 1/2$.  
Now let us define the model by the following Markov generator given by 
\begin{equation}
\label{harmonicgen}
L_N^{\mathrm{Harm}} 
= \sum_{x \in \mathbb{T}_N} L_{x,x+1}^{\mathrm{Harm}} 
\end{equation}
where $L_{x,x+1}^{\mathrm{Harm}}=\nabla_{x,x+1}^{\mathrm{Harm}} + \nabla_{x+1,x}^{\mathrm{Harm}}$ with  
\begin{equation*}
\nabla_{x,x+1}^{\mathrm{Harm}} f(\eta) 
= \sum_{k=1}^{\eta_x} \dfrac{\Gamma(\eta_x +1) \Gamma(\eta_x - k + 2\mathfrak s)}{k \Gamma(\eta_x - k +1 ) \Gamma(\eta_x + 2\mathfrak s)}  
\big[ f(\eta -k\delta_x + k \delta_{x+1}) - f(\eta)\big] 
\end{equation*}
and
\begin{equation*}
\nabla_{x+1,x}^{\mathrm{Harm}} f(\eta) 
= \sum_{k=1}^{\eta_{x+1} }
\dfrac{\Gamma(\eta_{x+1} +1) \Gamma(\eta_{x+1} - k + 2\mathfrak s)}{k \Gamma(\eta_{x+1} - k +1 ) \Gamma(\eta_{x+1} + 2\mathfrak s)} \big[ f(\eta + k \delta_x - k \delta_{x+1} ) - f(\eta)\big] .
\end{equation*}
In other words, 
a number of $k$ particles jump from site $x$ to site $y$ with rate
\begin{equation}
\label{eq:harm_rates}
c^k_{x,y} (\eta) = \dfrac{\Gamma(\eta_x+1) \Gamma(\eta_x - k + 2\mathfrak s)}{k \Gamma(\eta_x-k+1) \Gamma(\eta_x + 2\mathfrak s)} \mathbf{1}_{\lbrace k \leq \eta_x \rbrace}
\end{equation}
where again noting that a long-range interaction can be defined analogously. 
Similarly to the other models, the Harmonic model exhibits stationarity with respect to a product measure with spatially homogeneous weight. 
In this case, for a given parameter $\rho>0$, the common marginal of the invariant measure, that we denote by $\nu_\rho^{\mathrm{Harm}}$, is given by a negative binomial distribution: 
\begin{equation*}
\nu_{\rho}^{\mathrm{Harm}}(\eta_x=k) 
=\Big( \frac{1}{1+\rho}\Big)^{2\mathfrak s}
\Big( \frac{\rho}{1+\rho}\Big)^{k} 
\frac{\Gamma(2\mathfrak s +k)}{k!\Gamma(2\mathfrak s)}   
\end{equation*}
for each $x\in\mathbb T_N$ and for each $k\in \mathbb N_0$. 
Then, the generator $L_N^{\mathrm{Harm}}$ is self-adjoint with respect to the measure $\nu_\rho^{\mathrm{Harm}}$. 
Note that, with respect to this parametrization, all the factorial moments are given by 
\begin{equation} 
\label{momentsHarmonic}
E_{\nu_{\rho}^{\mathrm{Harm}}} 
\Big[ \prod_{k=1}^m (\eta_x-k+1) \Big]
= \frac{\Gamma(2\mathfrak s+m)}{\Gamma(2\mathfrak s)} \rho^m
\end{equation}
for each $m\in\mathbb N_0$.

\begin{Remark}
    The Harmonic models arise as an interacting particle system from a mapping of the non-compact Heisenberg XXX chain in one dimension \cite{frassek2020non}. This integrable system was first solved in \cite{lipatov1993high} using the quantum inverse scattering method. 
  In the case of spin $\mathfrak s=1/2$, the jump rates of the corresponding stochastic process are significantly simplified, leading to a model that plays the same role of the symmetric exclusion process in the compact case.
It is remarkable that, unlike the family of exclusion models, where the only integrable model is the one that  allows up to one particle per site, i.e. $\mathfrak s=1/2$, here all the models of the family are integrable for any spin $\mathfrak s>0$. 
\end{Remark}

\subsection{Common properties}
\label{common}
In what follows, let $\eta^\sigma=\{ \eta^\sigma(t):t\ge 0\}$ be a Markov process generated by $L_N^\sigma$ on the configuration space $\Omega_N^\sigma$ for each $\sigma \in \{\mathrm{gKMP},\mathrm{dKMP},\mathrm{Harm}\}$. 
The three models introduced above exhibit several common features, and the proof of their hydrodynamic limits is established through universal computations, which makes the approach quite general.
First of all, it is easy to see that there is one conserved quantity which remains invariant under the dynamics: the total energy for KMP and the total number of particles for discrete KMP and Harmonic models: i.e. for all $t\ge0$,  
\begin{equation}
\label{conslaw}
\sum_{x \in \mathbb{T}_N} \eta^\sigma_x(t) 
= \sum_{x \in \mathbb{T}_N} \eta^\sigma_x(0)
\end{equation}
for each $\sigma\in \{ \mathrm{gKMP},\mathrm{dKMP},\mathrm{Harm}\}$. 
Note that the occupation variable $\eta_x^\sigma$, which is either a non-negative real number or a non-negative integer, is an unbounded quantity. 
Furthermore, these models are characterized by constant diffusivity and quadratic convex mobility. They can be expressed within an algebraic framework based on representations of the non-compact $\mathfrak{su}(1,1)$ Lie algebra and are consequently linked through Markov duality relations, see \cite{dualitybook} for the details. 
Additionally, all these models are of gradient type, namely the instantaneous current can be written as the discrete gradient of some local function.
Indeed, letting $W^\sigma_{x,x+1}$ be the instantaneous current from site $x$ to $x+1$ defined by $L_N^\sigma \eta^\sigma_x = W_{x-1,x}^\sigma-W_{x,x+1}^\sigma$, we have that  
\begin{equation}\label{pluto}
W^\sigma_{x,x+1}
=L^\sigma_{x,x+1} \eta_x^\sigma
= D_\sigma
(\eta_{x+1}^\sigma - \eta_x^\sigma)
\end{equation}
where $D_\sigma>0$ is the diffusion coefficient of the model. 
For our models, a direct computation shows that 
\begin{equation}
\label{eq:diffusion_coefficient_list}
D_{\mathrm{gKMP}}
= D_{\mathrm{dKMP}} = \frac{1}{2}, \quad
D_{\mathrm{Harm}} = \frac{1}{2\mathfrak s}.  
\end{equation}
As a consequence, we can write
\begin{equation}
\label{diff}
\begin{aligned}
N^2L_N^\sigma \eta_x^\sigma 
= N^2\big(L_{x,x+1}^\sigma \eta_x^\sigma + L_{x-1,x}^\sigma \eta_x^\sigma \big)
= D_\sigma \Delta_N \eta^\sigma_x
\end{aligned}
\end{equation}
where $\Delta_N$ denotes the discrete Laplacian:
\begin{equation*}
\Delta_N\eta^\sigma_x 
= N^2 (\eta^\sigma_{x+1} + \eta_{x-1}^\sigma- 2\eta_x^\sigma).
\end{equation*}
Indeed, for the generalized KMP, 
\begin{equation*}
\begin{aligned}
L^{\mathrm{gKMP}}_N\eta_x
= \int_0^1 \gamma_{\mathfrak s}(u)\big(u(\eta_x +\eta_{x+1}) -\eta_x \big) du
+ \int_0^1 \gamma_{\mathfrak s}(u)\big((1-u) (\eta_x +\eta_{x-1}) -\eta_x \big) du
\end{aligned}
\end{equation*}
from which \eqref{diff} follows, noting $\int_0^t \gamma_{\mathfrak s}(u) du = 1$ and $\int_0^t \gamma_{\mathfrak s}(u)u du=\Gamma(2\mathfrak s)^2/(2\Gamma(4\mathfrak s))$. 
Next, for the discrete KMP, we have 
\begin{equation*}
\begin{aligned}
L^{\mathrm{dKMP}}_N\eta_x
=\frac{1}{\eta_x+\eta_{x+1}+1} \sum_{r=0}^{\eta_x+\eta_{x+1}} (r-\eta_x) 
+ \frac{1}{\eta_x+\eta_{x-1}+1} \sum_{r=0}^{\eta_x+\eta_{x-1}} (\eta_{x-1}-r) ,
\end{aligned}
\end{equation*}
so that \eqref{diff} clearly holds.
Finally for the harmonic model, we compute 
\begin{equation*}
L^{\mathrm{Harm}}_N\eta_x
= \sum_{k=1}^{\eta_{x+1}}c^{\mathrm{Harm}}_x(\eta)k
+ \sum_{k=1}^{\eta_{x-1}}c^{\mathrm{Harm}}_x(\eta)k
-2 \sum_{k=1}^{\eta_{x}}c^{\mathrm{Harm}}_x(\eta)k
\end{equation*}
where 
\begin{equation*}
c_x^{\mathrm{Harm}}(\eta)
= \frac{\Gamma(\eta_x+1)\Gamma(\eta_x-k+2\mathfrak s)}{\Gamma(\eta_x+2\mathfrak s)\Gamma(\eta-k+1)}.
\end{equation*}
In this last case, the identity \eqref{diff} follows from an elementary identity for the Beta-binomial distribution (see \eqref{ident_1}). 
The computation \eqref{diff} gives rise to the viscosity term, in the diffusive time scaling, in the hydrodynamic equation.

\subsection{Statement of the main result: hydrodynamic limit}
Fix a time horizon $T > 0$ and consider a finite time window $[0,T]$. 
For each model $\sigma\in \{\mathrm{gKMP}, \mathrm{dKPM}, \mathrm{Harm}\}$, let $\mcb{D}([0,T],\Omega_N^\sigma)$ be the space of c\`{a}dl\`ag time trajectories with values in $\Omega_N^\sigma$ endowed with the Skorohod topology; and let $\mcb{M}_+$ be the space of non-negative Radon measures on $\mathbb{T}$ equipped with the weak topology. 
For each $\sigma$, define the empirical energy (or particle) measure $\pi^{N,\sigma}(\cdot)\in \mcb{M}_+$ by
\begin{equation*}
\pi^{N,\sigma}(\eta^\sigma,du) 
= \dfrac{1}{N} \sum_{x \in \mathbb{T}_N} \eta_x^\sigma \delta_{x/N} (du) 
\end{equation*}
where $\delta_{x/N}(\cdot)$ is the Dirac measure in $\mathbb{T}$ with its mass at $x/N$. 
We denote the integral of a test function $G : \mathbb{T} \to \mathbb{R}$ with respect to $\pi^{N,\sigma}$ by $\langle \pi^{N,\sigma} , G \rangle$:
\begin{equation*}
\langle \pi^{N,\sigma},G \rangle 
=  N^{-1} \sum_{x\in\mathbb T_N} \eta_x^\sigma 
G(x/N). 
\end{equation*}

Here let us recall the notion of association to a density profile from \cite[Definition 3.0.2]{kipnis1999scaling}.

\begin{defin}[Association to a profile]
A sequence of probability measures $\left( \mu_N \right)_{N \geq 1}$ on the configuration space $\Omega_N^\sigma$ is associated to a bounded measurable density profile $\rho_0:\mathbb{T} \to \mathbb{R}_+$ if for every continuous function $G:\mathbb T\to \mathbb R$ and $\delta>0$, it holds that
\begin{equation*}
\lim_{N \to \infty } \mu_N \bigg( \eta^\sigma \in \Omega_N^\sigma 
: \Big| \langle \pi^{N,\sigma}, G \rangle - \int_{\mathbb T} G(u) \rho_0(u) du \Big| > \delta  \bigg) = 0.
\end{equation*}
\end{defin}
Given a probability measure $\mu_N^\sigma$ on $\Omega_N^\sigma$, denote by $\mathbb{P}_{\mu_N^\sigma}$ the probability measure on $\mcb{D}([0,T],\Omega_N^\sigma)$ induced by the initial measure $\mu_N^\sigma$ and the Markov process $\eta_.^\sigma$; the corresponding expectation is denoted by {$\mathbb{E}_{\mu_N^{\sigma}}$}.
Moreover, let $\mcb{D}([0,T],\mcb{M}_+)$ be the space of c\`{a}dl\`{a}g trajectories endowed with the Skorohod topology and let $(\mathbb{Q}_N^\sigma)_{N \ge 1}$ be the sequence of probability measures on $\mcb{D}([0,T],\mcb{M}_+)$ induced by the initial measure $\mu^\sigma_N$ and the Markov process $\pi^{N,\sigma}_\cdot$. 
Here let us impose the following condition on the initial measure, in order to make use of attractiveness. 

\begin{assum}
\label{assum:initial_measure}
For each model $\sigma\in\{ \mathrm{gKMP},\mathrm{dKMP},\mathrm{Harm}\}$, assume that the initial measure $\mu_N^\sigma$ is associated to a bounded, measurable density profile $\rho_0$ and it is stochastically dominated by the invariant measure $\nu_{\hat\rho}^\sigma$ for some $\hat\rho>0$, i.e. $\mu_N^\sigma\leq \nu_{\hat\rho}^\sigma $.
\end{assum} Above  the partial order between two measures is defined naturally: $\mu_1\le \mu_2$ holds if $E_{\mu_1}[f] \le E_{\mu_2}[f]$ for any bounded monotone function $f$\footnote{A function $f:\Omega_N^\sigma\to\mathbb R$ is said to be monotone if for any $\eta,\xi\in\Omega_N^\sigma$ such that $\eta\leq \xi$ i.e. $\eta_x \le \xi_x$ for all $x \in \mathbb{T}_N$, $f(\eta)\leq f(\xi)$. }.

Now, to state the main result, let us recall the notion of weak solutions of the heat equation.

\begin{defin}
\label{def:weak_solu}
Let $\rho_0: \mathbb{T} \to \mathbb{R}_+ $ be a bounded measurable function. A measurable function $\rho: [0,T] \times \mathbb{T} \to \mathbb{R}_+$ is a weak solution to the heat equation with initial profile $\rho_0$
\begin{equation}  
\label{eq:HDL_heat_equation}
\begin{cases}
\begin{aligned}
& \partial_t \rho(t,u) = D\Delta \rho(t,u)
&& (t,u) \in [0,T] \times \mathbb{T},
\\
& \rho(0,u) = \rho_0(u) && u \in \mathbb{T}
\end{aligned}
\end{cases}
\end{equation}
if $\rho \in L^2 ([0,T]\times \mathbb T)$ and for all $t \in [0, T]$ and  $H \in C^{1,2}([0, T] \times \mathbb{T}) $ it holds
\begin{equation}\label{eq:int_sol}
\int_{\mathbb T} \rho(t,u)H(t,u) du
- \int_{\mathbb T} \rho_0(u)H(0,u) du 
- \int_0^t \int_{\mathbb T} \rho(s,u)(\partial_s + \Delta) H(s,u)du ds
= 0. 
\end{equation}
\end{defin}
Above the space $C^{1,2}([0,T]\times \mathbb T)$ denotes the set of real-valued functions defined on $[0,T]\times \mathbb T$  that are of class $C^1$ on the first variable and $C^2$ on the second variable. The interested reader can find a proof of existence and uniqueness of the weak solution of the heat equation in \cite[Theorem A.2.4.4]{kipnis1999scaling} for example.

Now, for all three models, the main result is given as follows. 

\begin{Theorem}[Hydrodynamic limit]
\label{HDL}
Let $\rho_0: \mathbb{T} \to \mathbb{R}_+ $ be a {bounded} measurable function and let $\{\mu_N\}_{N \geq 1}$ be a sequence of probability measures satisfying \cref{assum:initial_measure} with the profile $\rho_0$ and density $\hat\rho>0$.  
{Additionally, we assume that the spin satisfies $ \mathfrak s\ge 1/2$ for $\sigma=\mathrm{Harm}$.} 
Then, for each model $\sigma\in \{\mathrm{gKMP}, \mathrm{dKPM}, \mathrm{Harm}\}$, for any $t \in [0,T]$, $\delta>0$ and  every continuous function $G:\mathbb T\to \mathbb R$, it holds that
\begin{equation*}
\lim_{N \to \infty} 
\mathbb{P}_{\mu_N} \bigg( 
\eta^\sigma \in \mcb{D}([0,T],\Omega_N^\sigma)
: \Big| \frac{1}{N}\sum_{x \in \mathbb{T}_N} \eta_x^\sigma(N^2 t) 
G\Big(\frac{x}{N} \Big) - \int_{\mathbb T} G(u) \rho^\sigma(t,u)du
\Big| > \delta  \bigg) = 0 
\end{equation*}
where $\rho^\sigma: [0,T] \times \mathbb{T} \to \mathbb{R}_+$ is the unique weak solution of the heat equation \ref{eq:HDL_heat_equation} with $D= D_\sigma$ and the diffusion coefficient $D_\sigma$ is given for each model by \eqref{eq:diffusion_coefficient_list}. 
\end{Theorem}

\begin{Remark}
The lower bound for the spin $\mathfrak s\ge 1/2$ when $\sigma=\mathrm{Harm}$ is a technical condition, which is required to show the attractiveness for the harmonic model.
We are not sure whether attractiveness still holds when $\mathfrak s<1/2$, but the hydrodynamic limit should be proved by detouring attractiveness. 
\end{Remark}

\begin{Remark}
It is not hard to see that our results go through on a $d$-dimensional space $\mathbb T_N^d$ with generic $d\ge 1$. 
Moreover, we can also study the same problem on the full lattice $\mathbb Z^d$. 
In the present paper, we decided to work on the one-dimensional torus to avoid technical difficulties, but we believe that the results should be valid for these cases.
\end{Remark}

The next section is dedicated to showing another property that is shared by all our models, namely, the \emph{attractiveness}. This property will be useful in the proof of our main result since it allows us to estimate expectations with respect to a general measure $\mu_N^\sigma$ by expectations with respect to the invariant measure $\nu_{\rho}^\sigma$ with $\rho=\hat\rho$ appearing in Assumption \ref{assum:initial_measure}, provided one is considering expectations of a monotone increasing function of $\eta^\sigma$, as well as the stochastic domination by the measure $\nu^\sigma_{\hat\rho}$ at initial time.

\section{Attractiveness}
\label{attra}
Attractiveness is a powerful tool to study interacting systems, see \cite[Chapter 2]{liggett1985interacting}, \cite[Chapter 9]{kipnis1999scaling}.   
Here let us recall the definition (see \cite[Definition 2.5.1]{kipnis1999scaling}) of attractiveness. 

\begin{defin}
An interacting particle system with Markov semi-group $\{S(t):t \ge0\}$ is said to be attractive if it preserves the partial order of the system, that is, any probability measure $\mu_1,\mu_2$ on the configuration space satisfies $S(t)\mu_1\le S(t)\mu_2$ for any $t \ge0$ provided that $\mu_1\le \mu_2$. 
\end{defin}

In the following, we show that all models, generalized KMP, discrete KMP and harmonic models, are attractive. 
Let us begin from the attractiveness of the generalized KMP model. 
To this end we use the basic coupling argument which we describe below.

\begin{Theorem}[Attractivness for generalized KMP models] The generalized KMP models with generator given in \eqref{gkmpgen} are attractive. 
\end{Theorem}
\begin{proof}
Let $\eta (t)$ and $\xi(t)$ be two independent copies of the generalized KMP. We have to show that the process preserves the partial order through the coupled evolution in the following sense: for all $t \geq 0$, $\eta (t) \leq \xi(t)$ a.s. as soon as $\eta (0) \leq \xi (0)$. This means that we start with two independent copies of the process $\eta (t)$ and $\xi (t)$ such that for all $x \in \mathbb{T}_N$, it holds that $\eta_x (0) \leq \xi_x (0)$. The basic coupling consist in associating to each bond $(x,x+1)$ the same Poisson clock. When it rings for the first time, say at time $t_1$ we have the following updated occupation variables 
\begin{align*}
& \eta_x(t_1) = B \big[\eta_x(t_1^-) + \eta_{x+1}(t_1^-) \big], \quad 
\eta_{x+1}(t_1) = (1-B) \big[\eta_x(t_1^-) + \eta_{x+1}(t_1^-) \big], \\
& \xi_x(t_1) = B \big[\xi_x(t_1^-) + \xi_{x+1}(t_1^-) \big], \quad 
\xi_{x+1}(t_1) = (1-B) \big[\xi_x(t_1^-) + \xi_{x+1}(t_1^-)\big] 
\end{align*}
where $B \sim \mathrm{Beta}(2\mathfrak s,2\mathfrak s)$ is a Beta random variable with parameter $(2\mathfrak s,2\mathfrak s)$. Since $\eta_x(t_1^-) = \eta_x(0) \leq \xi_x(0) = \xi_x(t_1^-)$ and
$\eta_{x+1}(t_1^-) = \eta_{x+1}(0) \leq \xi_{x+1}(0) = \xi_{x+1}(t_1^-)$, we note that the partial order at time $t_1$ is trivially satisfied: $\eta_x(t_1) \leq  \xi_x(t_1)$ and $\eta_{x+1}(t_1) \le \xi_{x+1}(t_1)$ for all $x \in \mathbb{T}_N$. 
\end{proof}

Next, to show that our particle systems are attractive, we refer to \cite[Theorem 2.9]{gobron2010couplings}, where necessary and sufficient conditions on the transition rates that yield attractiveness are provided.

\begin{prop}[{Theorem 2.9 of \cite{gobron2010couplings}}]
\label{prop:attractiveness}
A process $\left( \eta_t \right)_{t \geq 0}$ is attractive if and only if, when the occupation variables are ordered, i.e. $\xi_x \le \zeta_x$ 
for all $x\in\mathbb T_N$, the following two conditions regarding the transition rates are satisfied

 \begin{itemize}
     \item for all $ \ell \geq 0$, 
     \begin{equation}\label{att1}
         \sum_{k' > \zeta_{x+1} - \xi_{x+1}+ \ell} c^{k'}_{x,x+1} (\xi) \leq    \sum_{\ell' > \ell} c^{\ell'}_{x,x+1} (\zeta);
     \end{equation}
     \item    for all $ k \geq 0$, 
     \begin{equation} \label{att2}
         \sum_{k' > k} c^{k'}_{x,x+1} (\xi) \geq    \sum_{\ell' > \zeta_x - \xi_x + k} c^{\ell'}_{x,x+1} (\zeta).
     \end{equation}
 \end{itemize}

\end{prop}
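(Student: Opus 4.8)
The statement is \cite[Theorem 2.9]{gobron2010couplings}; I outline how its proof goes. The plan is to characterise attractiveness through the existence of an \emph{increasing Markovian coupling} assembled bond by bond. Since the generator is a sum over bonds and a jump across $(x,x+1)$ changes only $\eta_x$ and $\eta_{x+1}$, it suffices, for each oriented bond and each pair of ordered local configurations $\xi\le\zeta$, to produce nonnegative coupled rates $\widehat c^{\,k,\ell}_{x,x+1}(\xi,\zeta)$ with $k,\ell\ge 0$ not both zero --- meaning ``$k$ particles move $x\to x+1$ in the $\xi$-marginal while $\ell$ particles move $x\to x+1$ in the $\zeta$-marginal'' --- obeying the marginal constraints $\sum_{\ell\ge 0}\widehat c^{\,k,\ell}_{x,x+1}=c^{k}_{x,x+1}(\xi)$ for every $k\ge 1$ and $\sum_{k\ge 0}\widehat c^{\,k,\ell}_{x,x+1}=c^{\ell}_{x,x+1}(\zeta)$ for every $\ell\ge 1$, and supported only on order-preserving transitions. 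Superposing such rates over all oriented bonds defines a coupled generator on $\{(\xi,\zeta):\xi\le\zeta\}$ with the correct marginals; started from an ordered coupling of $\mu_1\le\mu_2$ (which exists by Strassen's theorem) it keeps the two coordinates ordered for all time and hence gives $S(t)\mu_1\le S(t)\mu_2$. So for sufficiency it is enough to build the bond-local coupled rates from \eqref{att1}--\eqref{att2}, while necessity will be obtained directly from the generator.

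Next I would identify the order-preservation constraint and match it to \eqref{att1}--\eqref{att2}. Writing $d_x:=\zeta_x-\xi_x\ge 0$ and $d_{x+1}:=\zeta_{x+1}-\xi_{x+1}\ge 0$, a simultaneous move of $k$ (in $\xi$) and $\ell$ (in $\zeta$) across $(x,x+1)$ preserves the order iff $\xi_x-k\le\zeta_x-\ell$ and $\xi_{x+1}+k\le\zeta_{x+1}+\ell$, i.e. iff $-d_{x+1}\le \ell-k\le d_x$; so the coupled rates must vanish outside this ``band''. Producing nonnegative $\widehat c^{\,k,\ell}$ with the prescribed marginals inside the band is a transportation-feasibility problem, and its binding cut conditions are precisely \eqref{att1}--\eqref{att2}: \eqref{att1}, rewritten $\sum_{k'>\ell+d_{x+1}}c^{k'}_{x,x+1}(\xi)\le\sum_{\ell'>\ell}c^{\ell'}_{x,x+1}(\zeta)$ for all $\ell\ge 0$, says the $\xi$-jumps too large to pair with a $\zeta$-jump of size $\le\ell$ never exceed the $\zeta$-jump mass at sizes $>\ell$; \eqref{att2}, rewritten $\sum_{\ell'>k+d_x}c^{\ell'}_{x,x+1}(\zeta)\le\sum_{k'>k}c^{k'}_{x,x+1}(\xi)$, is the symmetric statement constraining $\zeta$-jump mass to be absorbable within $d_x$ of the paired $\xi$-jump. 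From \eqref{att1}--\eqref{att2} I would construct the coupled rates either by a Strassen-type theorem for the two shifted ordered marginals or, concretely, by a greedy ``staircase'' matching that walks along the band from the smallest jump sizes upward; conversely, summing the marginal identities over tails shows any band-supported coupled rates entail \eqref{att1}--\eqref{att2}.

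For necessity I would argue at the level of the generator. Fix an ordered pair and $\ell\ge 0$; since $c^k_{x,x+1}$ depends only on the occupations at $x,x+1$, I may assume $\xi_{x+2}=\zeta_{x+2}=0$ without changing \eqref{att1}, so that in one infinitesimal step $\eta_{x+1}$ can increase only through a jump across $(x,x+1)$. Testing with the monotone function $f(\eta)=\mathbf{1}\{\eta_{x+1}>\zeta_{x+1}+\ell\}$, which vanishes at both $\xi$ and $\zeta$, gives $L_N^\sigma f(\xi)=\sum_{k'>\ell+d_{x+1}}c^{k'}_{x,x+1}(\xi)$ and $L_N^\sigma f(\zeta)=\sum_{\ell'>\ell}c^{\ell'}_{x,x+1}(\zeta)$; since $h\mapsto\mathbb{E}_{\zeta}[f(\eta_h)]-\mathbb{E}_{\xi}[f(\eta_h)]$ is nonnegative and vanishes at $h=0$, its right derivative is $\ge 0$, which is \eqref{att1}. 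For \eqref{att2} I would instead assume $\xi_{x-1}=\zeta_{x-1}=0$ (again harmless) and test with the monotone function $f(\eta)=\mathbf{1}\{\eta_{x-1}+\eta_{x}<m\}$ with threshold $m=\xi_x-k$, whose value is unchanged by internal $x\leftrightarrow x-1$ moves so that only a jump across $(x,x+1)$ can lower it in one step; the same differentiation yields $\sum_{k'>k}c^{k'}_{x,x+1}(\xi)\ge\sum_{\ell'>k+d_x}c^{\ell'}_{x,x+1}(\zeta)$, i.e. \eqref{att2}. (Both facts hold trivially when $k$ or $\ell$ is large enough that one side of the inequality vanishes.)

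The step I expect to be the genuine obstacle is the sufficiency construction of the coupled rates: one has to show that the two ``one-sided'' tail inequalities \eqref{att1}--\eqref{att2} together yield a single array $\widehat c^{\,k,\ell}$ that lives in the band with the correct marginals, even when the total jump rates $\sum_k c^k_{x,x+1}(\xi)$ and $\sum_\ell c^\ell_{x,x+1}(\zeta)$ differ, so that the surplus must be routed into the boundary entries $(k,0)$ (possible only for $k\le d_{x+1}$) or $(0,\ell)$ (possible only for $\ell\le d_x$) without leaving the band. This is the combinatorial core of \cite[Theorem 2.9]{gobron2010couplings}; I would handle it by reducing to a transportation polytope with staircase support and using a Gale--Hoffman (max-flow/min-cut) feasibility criterion to see that \eqref{att1}--\eqref{att2} are exactly the cut constraints that can fail, or equivalently by an explicit inductive filling of the array. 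A last, minor check, needed because the occupation variables are unbounded, is that the constructed rates are measurable and locally summable so that the superposed coupled generator genuinely defines a non-explosive Markov process on $\{\xi\le\zeta\}$ --- routine once the rates are in hand, using the conservation law.
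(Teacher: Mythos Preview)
The paper does not prove this proposition: it is stated verbatim as a citation of \cite[Theorem~2.9]{gobron2010couplings} and used as a black box to verify attractiveness of the discrete KMP and Harmonic models. There is therefore nothing in the paper to compare your argument against.

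That said, your outline is a faithful sketch of the Gobron--Saada proof strategy: reduce to bond-local coupled rates, identify the order-preservation constraint as the band $-d_{x+1}\le \ell-k\le d_x$, recognise \eqref{att1}--\eqref{att2} as the tail (cut) conditions for feasibility of a transportation problem on that band, and obtain necessity by differentiating $\mathbb E_\zeta[f(\eta_h)]-\mathbb E_\xi[f(\eta_h)]$ at $h=0$ for suitable monotone indicators. One small caveat on your necessity argument: in the present paper the process lives on the torus $\mathbb T_N$, so you cannot literally ``set $\xi_{x+2}=\zeta_{x+2}=0$'' while keeping an arbitrary ordered pair at $x,x+1$; however, since the rates $c^k_{x,x+1}$ here depend only on $(\eta_x,\eta_{x+1})$, you may simply choose configurations that are zero off $\{x,x+1\}$ from the start, which suffices for the generator comparison. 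Your identification of the genuine work --- showing that the two one-sided tail inequalities are exactly the feasibility constraints for the band-supported transport, including the routing of surplus mass to the boundary entries $(k,0)$ and $(0,\ell)$ --- is accurate, and the Gale--Hoffman / staircase approach you propose is indeed how it is carried out in \cite{gobron2010couplings}.
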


Applying \cref{prop:attractiveness}, we first show the attractiveness of the discrete KMP model. 
For sake of notation we adopt the following definitions for the occupation variables of the processes $\xi$ and $\zeta$ (as in \cite{gobron2010couplings}): define $\alpha,\beta,\gamma,\delta$ by 
\begin{equation}
\label{eq:variable_def_convention}
\alpha= \xi_x ,\quad
\beta=\xi_{x+1}, \quad
\gamma=\zeta_x,\quad
\delta=\zeta_{x+1}.
\end{equation}

\begin{Theorem}[Attractiveness for discrete KMP models]
The discrete KMP model with generator \eqref{dkmpgen} is attractive.
\end{Theorem}
\begin{proof}
For the discrete KMP model  {let us recall \eqref{eq:dKMP_rates}
and  check that equations \eqref{att1}
and \eqref{att2} are valid. }
The inequality \eqref{att1} reads, for all $\ell \geq 0$, 
\begin{equation*}
       \sum_{k' = \ell + 1 + \delta - \beta }^{\alpha} \dfrac{1}{\alpha + \beta +1}
    \leq 
      \sum_{\ell' = \ell+1}^{\gamma} \dfrac{1}{\gamma + \delta +1}. 
\end{equation*}
Then, computing the sums we have 
\begin{equation*}
\frac{\alpha + \beta - \ell - \delta}{\alpha + \beta +1}  
\le \frac{\gamma  - \ell}{\gamma + \delta +1}.
\end{equation*}
By rearranging the last display, we have that 
\begin{equation*}
(\alpha + \beta) (\delta +1) -\ell (\gamma + \delta)
\le (\gamma + \delta) (\delta +1) -\ell (\alpha + \beta),
\end{equation*}
which holds by the hypothesis $\alpha \leq \gamma $ and $\beta \leq \delta $.
For inequality \eqref{att2}, we need to prove that for all $k \geq 0$,
\begin{equation*}
 \sum_{k'= k+1}^{\alpha} \dfrac{1}{\alpha + \beta +1} \geq     \sum_{\ell'= \gamma - \alpha +k+1}^{\gamma} \dfrac{1}{\gamma + \delta +1} .
\end{equation*}
If $\a \le k$, both sides are $0$.
On the other hand, if $\a > k$, then computing the sums, one immediately sees that 
\begin{equation*}
\dfrac{\alpha -k}{\alpha + \beta +1} 
\ge \dfrac{\alpha -k}{\gamma + \delta +1} ,
\end{equation*}
which holds again by the hypothesis $\alpha \leq \gamma $ and $\beta \leq \delta $.
\end{proof}

\begin{Theorem}[Attractiveness for Harmonic models]
\label{attract-dkmp}
The Harmonic models with generator \eqref{harmonicgen} are attractive if $\mathfrak s\ge 1/2$.
\end{Theorem}
\begin{proof}
For the Harmonic models  let us recall \eqref{eq:harm_rates} and check that equations \eqref{att1} and \eqref{att2} hold.
Recall the notation \eqref{eq:variable_def_convention}. 
We start with showing \eqref{att1}, that is, when $\alpha \leq \gamma $ and $\beta \le \delta $ for all $\ell \geq 0$ it holds that
\begin{equation*}
    \sum_{k' = \ell + 1 + \delta - \beta }^{\alpha} \dfrac{1}{k'} \dfrac{\alpha! \Gamma(\alpha - k' + 2\mathfrak s)}{\Gamma(\alpha + 2\mathfrak s) (\alpha - k')!}
    \le
      \sum_{\ell' = \ell+1}^{\gamma} \dfrac{1}{\ell'} \dfrac{\gamma! \Gamma(\gamma - \ell' + 2\mathfrak s)}{\Gamma(\gamma + 2\mathfrak s) (\gamma - \ell')!}.
\end{equation*}
Since the support of the sum in the left-hand side in the last display is contained in the one in the right-hand side, the inequality is implied by
\begin{equation*}
\sum_{k' = \ell + 1 + \delta - \beta }^{\alpha} \dfrac{1}{k'} \dfrac{\Gamma(\alpha + 1) \Gamma(\alpha - k' + 2\mathfrak s)}{\Gamma(\alpha + 2\mathfrak s) \Gamma(\alpha - k' +1)}
\leq 
\sum_{k' = \ell + 1 + \delta - \beta }^{\alpha}
\dfrac{1}{k'} \dfrac{\Gamma(\gamma +1) \Gamma(\gamma - k' + 2\mathfrak s)}{\Gamma(\gamma + 2\mathfrak s) \Gamma(\gamma - k' +1)}
\end{equation*}
and the inequality holds if we show that the summand  
\begin{equation*}
    \varphi(n) =    \dfrac{1}{k'} \dfrac{\Gamma(n +1) \Gamma(n - k' + 2\mathfrak s)}{\Gamma(n + 2\mathfrak s) \Gamma(n - k' +1)}
\end{equation*}
is a non decreasing function in $n$ so that the hypothesis of $\alpha \leq \gamma$ implies $\varphi (\alpha)   \leq \varphi(\gamma)$ for $k' = \ell + 1 + \delta - \beta, \ldots, \alpha $.
We consider $\varphi$ as a function of a continuous variable $z$ and show that $\varphi'(z) \geq 0$. The sign of the derivative is determined by its numerator, namely 
we want to show that the quantity
\begin{align*}
& \big[ \Gamma'(z +1 ) \Gamma(z - k' +2\mathfrak s) + \Gamma(z+1) \Gamma'(z -k' +2\mathfrak s ) \big] 
\big[ \Gamma(z +2\mathfrak s) \Gamma(z -k' +1) \big]  \\
&-\big[ \Gamma(z +1) \Gamma(z -k' +2\mathfrak s)  \big] 
\big[ \Gamma'(z +2\mathfrak s ) \Gamma(z - k' +1) + \Gamma(z+2\mathfrak s) \Gamma'(z -k' +1 ) \big]  
\end{align*}
is non-negative for any $z \ge k'$.
We now divide the last quantity by the gamma function to reconstruct the digamma function, namely, the logarithm derivative of the gamma function:
$$\psi (z) =  \frac{d}{dz} \ln(\Gamma(z)) = \Gamma'(z)/\Gamma(z) . 
$$
Then, what we would like to show is the following: 
\begin{align*}
\dfrac{\Gamma'(z +1)}{\Gamma(z +1)} + \dfrac{\Gamma'(z -k' +2\mathfrak s)}{\Gamma(z -k' +2\mathfrak s)} 
\geq 
\dfrac{\Gamma'(z +2\mathfrak s)}{\Gamma(z +2\mathfrak s)} + \dfrac{\Gamma'(z -k' +1)}{\Gamma(z -k' +1)} .
\end{align*} 
Using the digamma function $\psi$, the last display reads
\begin{equation*}
\psi(z +1) -   \psi(z - k' +1) \geq   \psi(z +2\mathfrak s) -   \psi(z- k' +2\mathfrak s).
\end{equation*}
Again, if we show that $g(z)=\psi(z +1) - \psi(z - k' +1)$ is non increasing in $z$, we are done since $z+1 \le z + 2\mathfrak s$ for $2\mathfrak s \geq 1$. 
This involves the first polygamma function ($\psi^{(m)}(z) = \frac{d^m}{dz^m} \psi(z) = \frac{d^m}{dz^m} \ln(\Gamma(z)) $ with $m=1$),
which, for positive arguments, is a decreasing function, i.e.
$\psi'(z +1) \leq \psi'(z -k' +1) $. This
implies  $g'(z) = \psi'(z +1) -   \psi'(z - k' +1) \leq 0 $ and thus $g(z)$ non increasing for $z \ge k'$.

On the other hand, for \eqref{att2}, we have to show that for all $k \geq 0$
\begin{equation*}
    \sum_{k' = k + 1 }^{\alpha} \dfrac{1}{k'} \dfrac{\alpha! \Gamma(\alpha - k' + 2\mathfrak s)}{\Gamma(\alpha + 2\mathfrak s) (\alpha - k')!}
    \geq 
      \sum_{\ell' = k+1 + \gamma - \alpha}^{\gamma} \dfrac{1}{\ell'} \dfrac{\gamma! \Gamma(\gamma - \ell' + 2\mathfrak s)}{\Gamma(\gamma + 2\mathfrak s) (\gamma - \ell')!}.
\end{equation*}
Consider the following change of variable for the sum on the right-hand side of last display: $k' = \ell' - \gamma + \alpha$, then the previous inequality reads
\begin{equation*}
    \sum_{k' = k + 1 }^{\alpha} \dfrac{1}{k'} \dfrac{\alpha! \Gamma(\alpha - k' + 2\mathfrak s)}{\Gamma(\alpha + 2\mathfrak s) \Gamma(\alpha - k' +1)}
    \geq 
      \sum_{k' = k+1 }^{\alpha} \dfrac{1}{k' + \gamma - \alpha} \dfrac{\gamma! \Gamma(\alpha - k' + 2\mathfrak s)}{\Gamma(\gamma + 2\mathfrak s) \Gamma(\alpha - k' +1)}.
\end{equation*}
The above inequality holds  if we show that for all $k+1 \leq k' \le \alpha$,
\begin{equation*}
  \dfrac{1}{k'} \dfrac{\Gamma(\alpha+1) }{\Gamma(\alpha + 2\mathfrak s) }  
  \geq 
  \dfrac{1}{k' + \gamma - \alpha} \dfrac{\Gamma(\gamma +1) }{\Gamma(\gamma + 2\mathfrak s) }
\end{equation*}
namely, 
\begin{equation*}
    \Gamma(\alpha+1)k'\Gamma(\gamma + 2\mathfrak s) + \Gamma(\alpha +1)\Gamma(\gamma + 2\mathfrak s) (\gamma - \alpha) \geq \Gamma(\gamma +1) k' \Gamma (\alpha +2\mathfrak s)
\end{equation*}
which, since $\gamma - \alpha \geq 0$ holds if
\begin{equation*}
    \dfrac{\Gamma(\gamma + 2\mathfrak s)}{\Gamma(\gamma +1)} \geq  \dfrac{\Gamma(\alpha + 2\mathfrak s)}{\Gamma(\alpha +1)}, 
\end{equation*}
which is verified if the function
\begin{equation*}
f(n)= \dfrac{\Gamma(n + 2\mathfrak s)}{\Gamma(n +1)}
\end{equation*}
is non-decreasing in $n \geq 0$. 
As before, we study the sign of its derivative which is determined by the numerator:
\begin{equation*}
    \Gamma'(z+ 2\mathfrak s) \Gamma(z+1) - \Gamma(z +2\mathfrak s)\Gamma'(z+1).
\end{equation*}
Again notice that $f'(z) \geq 0$ since 
the digamma function is an increasing function for positive argument:
\begin{equation*}
  \psi(z+2\mathfrak s) = \dfrac{ \Gamma'(z+ 2\mathfrak s)}{\Gamma(z +2\mathfrak s)} \geq   \dfrac{ \Gamma'(z+ 1)}{\Gamma(z+1)} =  \psi(z+1) ,
\end{equation*}
for $2\mathfrak s \ge 1$. 
\end{proof}

\section{Proof of the hydrodynamic limit} 
\label{prooof}
In this section, we give a proof of \cref{HDL}.
The strategy of the proof is based on the so-called entropy method which was introduced in \cite{guo1988nonlinear}, see \cite{kipnis1999scaling} for the pedagogical description of the method. 
First, we prove that for each model $\sigma\in \{ \mathrm{gKMP},\mathrm{dKMP}, \mathrm{Harm}\}$, the sequence $(\mathbb{Q}^\sigma_N)_{N \geq 1}$ is tight with respect to the Skorohod topology in $\mcb{D}([0,T],\mcb{M}_+)$, see Section \ref{tightness}, so that the sequence has a limit point by taking a subsequence if necessary.  
Next, in Section \ref{clp} we characterize the limit points $\mathbb Q^*$ as the trajectory of measures that are absolutely continuous with respect to Lebesgue and whose density is the weak solution of the heat equation with the corresponding diffusion coefficient.
To this end, we firstly show that all the limit points are concentrated on trajectories of measures, that are continuous in time and are absolutely continuous with respect to the Lebesgue measure. 
Afterwards, we will show that the density of the limiting measure is a weak solution of the heat equation \eqref{eq:HDL_heat_equation}. 
The convergence of the full sequence follows from the uniqueness of the weak solution of the heat equation. 

\subsection{Tightness}
\label{tightness}
In order to show that the sequence of empirical measures $\{\pi^{N,\sigma}_\cdot\}_N$ is tight, 
it is enough to prove that for any $G \in C^2(\mathbb T)$
\begin{equation}\label{tight}
\lim_{A \to \infty} \limsup_{N \to \infty} 
\mathbb{P}_{\mu_N} \left(\sup_{0 \le t \le T}  \lvert \langle \pi_{t}^{N,\sigma}, G \rangle \rvert  > A \right) = 0
\end{equation}
and for all $\varepsilon > 0$ that 
\begin{equation}\label{tight2}
\lim_{\delta \to 0^+} \limsup_{N \to \infty} 
\mathbb{P}_{\mu_N} \left(\sup_{\substack{|t-s| \le \delta,\\ 0 \le t,s \le T}} 
\lvert \langle \pi_{t}^{N,\sigma}, G \rangle - \langle \pi_{s}^{N,\sigma}, G \rangle \rvert  > \varepsilon \right) = 0.
\end{equation}
(see \cite[Theorem 4.1.3, Remark 4.1.4]{kipnis1999scaling}.)
In view of Markov's inequality, the assertion \eqref{tight} boils down to the following:
\begin{equation*}
\sup_N \mathbb{E}_{\mu_N} \left[\sup_{0 \le t \le T}  \lvert \langle \pi_{t}^{N,\sigma}, G \rangle \rvert \right] < \infty.
\end{equation*}
By the conservation law, we have
\begin{equation*}
\mathbb{E}_{\mu_N} \left[\sup_{0 \le t \le T}  \lvert \langle \pi_{t}^{N,\sigma}, G \rangle \rvert \right] \le \|G\|_{\infty}\mathbb{E}_{\mu_N} \left[\sup_{0 \le t \le T}  \frac{1}{N} \sum_{x \in \mathbb{T}_N}\eta^{\sigma}_x(t) \right] = \|G\|_{\infty}\mathbb{E}_{\mu_N} \left[\frac{1}{N} \sum_{x \in \mathbb{T}_N}\eta^{\sigma}_x(0) \right].
\end{equation*}
The sum $\sum_{x \in \mathbb{T}_N}\eta^{\sigma}_x$ is a monotone function in $\eta^\sigma$, so we can bound the last term with the expectation with respect to $\nu_{\hat\rho}$ where $\hat\rho>0$ is the constant appearing in \cref{assum:initial_measure}:
\begin{equation}\label{unifbdd}
\mathbb{E}_{\mu_N}\Big[ \frac{1}{N}\sum_{x \in \mathbb T_N} \eta_x^\sigma (0) \Big] 
\le {E_{\nu_{\hat\rho}^\sigma}}[\eta_0^\sigma]{<+\infty},
\end{equation} 
hence we conclude \eqref{tight}.
To show \eqref{tight2}, recall from Dynkin's martingale formula, for any $t\ge 0$ and for any test function $G\in C^2(\mathbb T)$ that 
\begin{equation}\label{eq:Dynkin}
M_t^{N,\sigma}(G) 
= \langle \pi_t^{N,\sigma}, G \rangle 
- \langle \pi_0^{N,\sigma}, G \rangle 
- \int_0^t N^2 L_N^\sigma \langle \pi_s^{N,\sigma}, G \rangle ds
\end{equation}
and 
\begin{equation*}
N_t^{N,\sigma}(G) = M_t^{N,\sigma}(G)^2 - \langle M^{N,\sigma}(G) \rangle_{t} 
\end{equation*}
are mean-zero martingales with respect to the natural filtration of the process, where the quadratic variation is given by
\begin{equation}
\label{eq:qv}
\langle M^{N,\sigma}(G) \rangle_{t} 
=  \int_0^{t} \Upsilon_{\mathfrak s}^{N,\sigma}(G)ds  
\end{equation}
with 
$$
\Upsilon_{\mathfrak s}^{N,\sigma}(G)
= N^2 L_N \langle \pi_s^{N,\sigma}, G \rangle^2 
-2 N^2 \langle \pi_s^{N,\sigma}, G \rangle  L_N^\sigma \langle \pi_s^{N,\sigma}, G \rangle .
$$

Then, we can see that the carr\'e du champ $\Upsilon_{\mathfrak s}^{N,\sigma}(G)$ has the following bound commonly to all the models.

\begin{lemma}
\label{lem:qv_bound}
For each model $\sigma\in \{ \mathrm{gKMP},\mathrm{dKMP},\mathrm{Harm}\}$, for any $t\ge 0$ and for any test function $G \in C^2 (\mathbb{T})$, there exists some constant $C=C(G)>0$ such that
\begin{equation}
\label{eq:carre_du_champs_bound}
\Upsilon^{N,\sigma}_s(G)
\le \frac{C}{N^2}\sum_{x\in\mathbb T_N} \eta_x^\sigma(t)^2 .
\end{equation}
\end{lemma}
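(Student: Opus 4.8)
The plan is to compute the carré du champ $\Upsilon_s^{N,\sigma}(G)$ directly from its definition, decompose it as a sum over bonds, and bound each bond contribution by a quadratic expression in the local occupation variables. The starting point is the standard algebraic identity: for a generator written as a sum of jump terms, $L_N^\sigma h^2 - 2h L_N^\sigma h$ evaluated at $h = \langle \pi_s^{N,\sigma}, G\rangle$ equals a sum over the elementary moves of (rate) $\times$ (increment of $h$)$^2$. Since the generator splits as $L_N^\sigma = \sum_{x} L_{x,x+1}^\sigma$ and each bond operator only moves mass between $x$ and $x+1$, the increment of $h = N^{-1}\sum_y \eta_y G(y/N)$ under a move of $k$ units from $x$ to $x+1$ is $N^{-1} k\, [G((x+1)/N) - G(x/N)]$, whose square is bounded by $N^{-2} k^2 \|G'\|_\infty^2 N^{-2} = C k^2 N^{-4}$ using $|G((x+1)/N)-G(x/N)| \le \|G'\|_\infty/N$ for $G \in C^2(\mathbb T) \subset C^1(\mathbb T)$.

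The main step is then to show, for each model, that $\sum_{\text{moves at bond }(x,x+1)} (\text{rate}) \times k^2 \le C(\eta_x^\sigma + \eta_{x+1}^\sigma)^2 \le 2C(\eta_x^{\sigma2} + \eta_{x+1}^{\sigma2})$. For gKMP the move transfers $\alpha \in [0,\eta_x]$ (or into $x$ from $x+1$), and the weighted integral of $\alpha^2$ against the Beta-type kernel is at most $(\eta_x+\eta_{x+1})^2$ since $\alpha \le \eta_x + \eta_{x+1}$ and the kernel integrates to $1$. For dKMP, $\sum_{k=1}^{\eta_x} \frac{k^2}{\eta_x+\eta_{x+1}+1} \le \frac{\eta_x^3}{\eta_x+\eta_{x+1}+1} \le \eta_x^2$, and similarly for the reverse direction. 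For the Harmonic model one uses that the total rate out of $x$ summed against $k^2$ is controlled by a constant times $\eta_x^2$; here one can invoke that $\sum_{k=1}^{\eta_x} c_{x,x+1}^k(\eta) k = D_{\mathrm{Harm}}\eta_x$ (the first-moment computation already done in \eqref{diff}) together with $k^2 \le \eta_x k$ for $k \le \eta_x$, which gives $\sum_k c_{x,x+1}^k(\eta) k^2 \le \eta_x \sum_k c_{x,x+1}^k(\eta) k = D_{\mathrm{Harm}} \eta_x^2$. Summing the per-bond bounds $C k^2 N^{-4}$ weighted by the rates over all bonds $x \in \mathbb T_N$ produces $\Upsilon_s^{N,\sigma}(G) \le C N^2 \cdot N^{-4} \sum_{x}(\eta_x^{\sigma2} + \eta_{x+1}^{\sigma2}) = C N^{-2}\sum_x \eta_x^\sigma(s)^2$ (after relabeling and adjusting $C$), which is the claimed bound; here I note that the right-hand side of \eqref{eq:carre_du_champs_bound} should be understood at time $s$ as in \eqref{eq:qv}.

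I expect the only genuinely delicate point to be the Harmonic model: its rates $c_{x,x+1}^k(\eta)$ involve ratios of Gamma functions and are not as transparent as the dKMP rates, so verifying $k^2 \le \eta_x k$ suffices and reducing to the already-established first-moment identity is the cleanest route, but one must be careful that the identity \eqref{diff} is an equality of the \emph{signed} current rather than of $\sum_k c^k k$ alone — in fact the relevant sum $\sum_{k=1}^{\eta_x} c_{x,x+1}^{k,\mathrm{Harm}}(\eta) k$ telescopes to $D_{\mathrm{Harm}}\eta_x$ by the Beta-binomial identity referenced as \eqref{ident_1}, and this is exactly what is needed. Everything else is routine bookkeeping: the key conceptual inputs are the $C^1$ bound on discrete gradients of $G$, which produces the gain of $N^{-2}$ that compensates the $N^2$ diffusive scaling and leaves the extra $N^{-2}$ in the statement, and the elementary pointwise inequalities bounding the second moment of the jump size by the square of the local mass.
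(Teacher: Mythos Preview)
Your proof is correct and reaches the claimed bound, but the route differs from the paper's. The paper first manipulates $\Upsilon^{N,\sigma}(G)$ algebraically via the bilinear expansion $\sum_{x,y}G(x/N)G(y/N)[L_N(\eta_x\eta_y)-2\eta_xL_N\eta_y]$, uses the conservation law at each bond to reduce $L_{x,x+1}(\eta_x^2)$ to $L_{x,x+1}(\eta_x\eta_{x+1})$, and arrives at the exact identity
\[
\Upsilon^{N,\sigma}_t(G)=\frac{1}{N^2}\sum_{x\in\mathbb T_N}\big(\nabla_N^+G(x/N)\big)^2\big[D_\sigma(\eta_x-\eta_{x+1})^2-L_{x,x+1}^\sigma(\eta_x\eta_{x+1})\big],
\]
after which it computes $L_{x,x+1}^\sigma(\eta_x\eta_{x+1})$ explicitly for each model (using in particular the second Beta-binomial identity \eqref{eq:harmonic_model_useful_identity2} for the Harmonic case) and bounds the bracket by $D_\sigma(\eta_x^2+\eta_{x+1}^2)$. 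You bypass this by invoking the jump-process formula $\Gamma(h,h)=\sum(\text{rate})\cdot(\text{increment})^2$ directly, so that the bond contribution is $(\nabla_N^+G)^2 N^{-2}\sum_k c^k k^2$, and then bound the second moment of the jump size by elementary pointwise inequalities: $\alpha^2\le(\eta_x+\eta_{x+1})^2$ for gKMP, $\sum_{k\le\eta_x}k^2/(\eta_x+\eta_{x+1}+1)\le\eta_x^2$ for dKMP, and $k^2\le\eta_x k$ together with the first-moment identity \eqref{ident_1} for the Harmonic model. Your argument is shorter and avoids computing $L_{x,x+1}(\eta_x\eta_{x+1})$ at all; the paper's approach yields exact expressions for the carr\'e du champ which could be of independent use. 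Both proofs ultimately rely on the same Beta-binomial first-moment identity for the Harmonic model, and your observation that $k^2\le \eta_x k$ reduces the needed input to \eqref{ident_1} alone is a nice simplification over the paper's use of both \eqref{ident_1} and \eqref{eq:harmonic_model_useful_identity2}.
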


The proof of \cref{lem:qv_bound} is postponed to \cref{sec:universal_computations}. 
Now, in view of Markov's and Chebychev’s inequalities, the assertion \eqref{tight2} is reduced to the following.
\begin{prop}
\label{prop:tightness_key_estimates}
For each model $\sigma\in \{ \mathrm{gKMP},\mathrm{dKMP},\mathrm{Harm}\}$, for any test function $G \in C^2 (\mathbb{T})$, it holds that 
\begin{equation*}
\lim_{\delta \to 0^+} \limsup_{N \to \infty} 
\mathbb{E}_{\mu_N} \bigg[ \sup_{\substack{|t-s| \le \delta,\\ 0 \le t,s \le T}} \bigg| \int_{s}^{t} N^2 L^\sigma_N \langle \pi_r^{N,\sigma}, G \rangle dr \bigg| \bigg] = 0
\end{equation*}
and
\begin{equation*}
\lim_{\delta \to 0^+} \limsup_{N \to \infty} 
\mathbb{E}_{\mu_N} \Big[ \sup_{\substack{|t-s| \le \delta,\\ 0 \le t,s \le T}} \big( M_{t}^{N,\sigma}(G) - M_{s}^{N,\sigma}(G) \big)^2  \Big] = 0.
\end{equation*}
\end{prop}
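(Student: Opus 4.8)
The plan is to treat the two displays separately, reducing each to an a priori estimate controlled through the gradient structure of the models, the conservation law \eqref{conslaw}, and the stochastic domination in \cref{assum:initial_measure} (propagated along the dynamics by attractiveness); the second display additionally invokes \cref{lem:qv_bound}. The recurring mechanism is that, although the occupation variables are unbounded, the functionals $\sum_{x\in\mathbb T_N}\eta_x^\sigma$ and $\sum_{x\in\mathbb T_N}(\eta_x^\sigma)^2$ are monotone, so their expectation under the law of $\eta^\sigma(s)$ is bounded, uniformly for $s\in[0,T]$, by the corresponding (finite) expectation under $\nu_{\hat\rho}^\sigma$.

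For the first display, I would begin by exploiting linearity of the generator in the coordinate functions together with the gradient identity \eqref{diff}, which gives $N^2 L_N^\sigma\langle\pi_r^{N,\sigma},G\rangle=\frac{D_\sigma}{N}\sum_{x\in\mathbb T_N}G(x/N)\,\Delta_N\eta_x^\sigma(r)$. A discrete summation by parts on the torus---there are no boundary terms---transfers the discrete Laplacian onto $G$, yielding $\frac{D_\sigma}{N}\sum_{x}\eta_x^\sigma(r)\,\Delta_N G(x/N)$, with $\|\Delta_N G\|_\infty\le C(G)$ since $G\in C^2(\mathbb T)$. Using \eqref{conslaw} to replace $\frac1N\sum_x\eta_x^\sigma(r)$ by $\frac1N\sum_x\eta_x^\sigma(0)$ for every $r$, the integral over $[s,t]$ is bounded in absolute value by $\delta\,C(G)\,\frac1N\sum_x\eta_x^\sigma(0)$ whenever $|t-s|\le\delta$, uniformly in $s,t$; taking $\mathbb E_{\mu_N}$ and using \eqref{unifbdd} bounds the expectation by $\delta\,C(G)\,E_{\nu_{\hat\rho}^\sigma}[\eta_0^\sigma]$, which vanishes as $\delta\to0$.

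For the second display, I would observe that $M_0^{N,\sigma}(G)=0$, so that $\sup_{|t-s|\le\delta}(M_t^{N,\sigma}(G)-M_s^{N,\sigma}(G))^2\le 4\sup_{0\le r\le T}(M_r^{N,\sigma}(G))^2$, and apply Doob's $L^2$ maximal inequality together with $\mathbb E_{\mu_N}[(M_T^{N,\sigma}(G))^2]=\mathbb E_{\mu_N}[\langle M^{N,\sigma}(G)\rangle_T]$ to reduce matters to showing $\mathbb E_{\mu_N}[\langle M^{N,\sigma}(G)\rangle_T]\to0$. By \eqref{eq:qv} and \cref{lem:qv_bound}, $\mathbb E_{\mu_N}[\langle M^{N,\sigma}(G)\rangle_T]\le \frac{C}{N^2}\int_0^T\mathbb E_{\mu_N}\big[\sum_{x}(\eta_x^\sigma(s))^2\big]\,ds$. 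Since $\sum_x(\eta_x^\sigma)^2$ is monotone and $S(s)\mu_N^\sigma\le S(s)\nu_{\hat\rho}^\sigma=\nu_{\hat\rho}^\sigma$ by attractiveness and invariance, we get $\mathbb E_{\mu_N}[\sum_x(\eta_x^\sigma(s))^2]\le N\,E_{\nu_{\hat\rho}^\sigma}[(\eta_0^\sigma)^2]$, which is finite by \eqref{momentsKMP}, \eqref{momentsdKMP} and \eqref{momentsHarmonic}. Hence $\mathbb E_{\mu_N}[\langle M^{N,\sigma}(G)\rangle_T]\le C\,T\,E_{\nu_{\hat\rho}^\sigma}[(\eta_0^\sigma)^2]/N=O(1/N)$ uniformly in $\delta$, so the quantity in the second display vanishes as $N\to\infty$, and a fortiori the double limit is $0$.

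I do not expect a serious obstacle here: once \cref{lem:qv_bound} and the attractiveness results of \cref{attra} are available, both estimates are of a routine nature. The only two points requiring attention are, first, to carry out the summation by parts correctly and to note that $\Delta_N G$ is uniformly bounded for $C^2$ test functions; and, second---the conceptual crux---to route systematically every a priori bound through stochastic domination, since the unboundedness of $\eta_x^\sigma$ precludes a direct control of $\sum_x(\eta_x^\sigma(s))^2$: it is only as a monotone functional, combined with the initial domination $\mu_N^\sigma\le\nu_{\hat\rho}^\sigma$ and attractiveness, that the finite equilibrium second moment can be transferred to all times $s\in[0,T]$. This is exactly where \cref{assum:initial_measure} enters the tightness argument.
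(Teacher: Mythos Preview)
Your proposal is correct and follows essentially the same route as the paper: for the first display you use the gradient identity \eqref{diff}, summation by parts, the conservation law, and the first-moment bound \eqref{unifbdd}; for the second you bound the supremum by $\sup_{r\le T}M_r^{N,\sigma}(G)^2$, apply Doob's inequality, invoke \cref{lem:qv_bound}, and use attractiveness plus the stochastic domination of \cref{assum:initial_measure} to control the second moment under $\nu_{\hat\rho}^\sigma$. The only cosmetic differences are in the constants and in how explicitly the summation by parts is spelled out.
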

\begin{proof}
For the first item, using the computation \eqref{diff}, 
\begin{equation*}
\begin{aligned}
 \mathbb{E}_{\mu_N} \bigg[  \bigg| 
\int_{s}^{t} N^2 L_N^\sigma \langle \pi_r^{N,\sigma}, G \rangle dr \bigg| \bigg] 
&=D_\sigma \mathbb{E}_{\mu_N} \bigg[ \bigg| \int_{s}^{t} \frac{1}{N} \sum_{x\in \mathbb{T}_N} \Delta_N G\Big(\frac{x}{N}\Big) \eta_x^\sigma(r) dr  \bigg| \bigg] \\ 
& \le D_\sigma|t-s| \| \Delta G\|_{L^\infty(\mathbb T)}
\mathbb{E}_{\mu_N}\Big[\frac{1}{N} \sum_{x \in \mathbb{T}_N} \eta^\sigma_x (0) \Big], 
\end{aligned}
\end{equation*}
where in the last inequality we used the conservation law \eqref{conslaw}. 
Above, $\Delta_N$ denotes the discrete one-dimensional Laplacian so that 
\begin{equation*}
\Delta_NG(x/N)
= N^2\big(G((x+1)/N) 
+ G((x-1)/N)
-2G(x/N) ) \big) . 
\end{equation*} 
Hence, by \eqref{unifbdd}, we conclude the proof.

For the second item, note that 
\begin{align*}
\mathbb{E}_{\mu_N} \bigg[ \sup_{\substack{|t-s| \le \delta,\\ 0  \le t,s \le T}} \big( M_{t}^{N,\sigma}(G) - M_{s}^{N,\sigma}(G) \big)^2  \bigg] &
\le \mathbb{E}_{\mu_N} \Big[ \sup_{\substack{|t-s| \le \delta,\\ 0 \le t,s \le T}} \big(2  M_{t}^{N,\sigma}(G)^2  +2  M_{s}^{N,\sigma}(G) ^2 \big)  \Big]  \\
 & \le 4 \mathbb{E}_{\mu_N} \Big[ \sup_{0 \le t \le T} M_{t}^{N,\sigma}(G)^2 \Big] \le 16 \mathbb{E}_{\mu_N} \Big[  M_{T}^{N,\sigma}(G)^2 \Big]
\end{align*}
where we used Doob's inequality in the last inequality. 
Since the martingale $N_t^{N,\sigma}(G)$ is mean-zero, we have 
\begin{equation} 
\label{pippo}
\mathbb{E}_{\mu_N} 
\Big[ M_{T}^{N,\sigma} (G)^2 
 \big] 
=  \mathbb{E}_{\mu_N} \Big[ 
\int_{0}^{T} \Upsilon_r^{N,\sigma}(G)dr \Big].
\end{equation}
Using the common bound \eqref{eq:carre_du_champs_bound}, we have that 
\begin{equation*}
\begin{aligned}
\mathbb{E}_{\mu_N} \Big[ \int_{0}^{T} \Upsilon_r^{N,\sigma}(G)ds \Big] 
& \le C \mathbb{E}_{\mu_N}  \Big[ \int_{0}^{T} \frac{1}{N^2} \sum_{x \in \mathbb{T}_N} \eta_ x^\sigma(r)^2 dr \Big]. 
\end{aligned}
\end{equation*}
Then, since the function $\sum_{x \in \mathbb{T}_N} ( \eta_ x^\sigma)^2$ is a monotone function, by the attractiveness and stochastic domination, we have
\begin{equation*}
\mathbb{E}_{\mu_N} \bigg[ \int_{0}^{T} \Upsilon_r^{N,\sigma}(G)ds \bigg] 
\le C {\mathbb{E}_{\nu_{\hat\rho}^\sigma}} \Big[ \int_{0}^{T} \frac{1}{N^2} \sum_{x \in \mathbb{T}_N} \eta_ x^\sigma(r)^2 dr \Big]
= CT\frac{1}{N} {E_{\nu_{\hat\rho}^\sigma}} \big[  (\eta_0^\sigma)^2 \big], 
\end{equation*}
where $\hat{\rho}$ is the parameter in Assumption \ref{assum:initial_measure}. Hence, we are done.
\end{proof}

\subsection{Characterization of  limit points}\label{clp}
In this part, we characterize the limit points of the sequence of empirical measures $\{\pi^{N,\sigma}_\cdot\}_N$, or $\{\mathbb Q^\sigma_N\}_N$ in terms of its distribution on the path space.  
Throughout this section, let us omit the dependency on $\sigma$, since the proof goes through exactly in the same way for all three models.  

\subsubsection{The limit point of the sequence has continuous time trajectories}
In order to show that any limiting point of the sequence $\lbrace \pi^N_{.} \rbrace_{N \geq 1}$ is  continuous with respect to the time variable we will show the next result. 
\begin{lemma}
For any $G\in C(\mathbb T)$, we have that
\begin{equation*}
\lim_{N \to \infty} \mathbb{E}_{\mu_N} \Big[ 
\sup_{t\ge0} \big| \langle \pi^N_t,G \rangle - \langle \pi^N_{t^-},G \rangle \big| \Big] = 0. 
\end{equation*}
\end{lemma}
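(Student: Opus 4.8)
The plan is to control the size of a single jump of the empirical measure uniformly in time. When the process $\eta^\sigma$ jumps across the bond $(x,x+1)$, the only coordinates that change are $\eta_x$ and $\eta_{x+1}$, and the total mass $\eta_x + \eta_{x+1}$ on that bond is conserved by the jump. Consequently, writing $\eta(t)$ and $\eta(t^-)$ for the post- and pre-jump configurations, the difference
\[
\langle \pi^N_t, G \rangle - \langle \pi^N_{t^-}, G \rangle
= \frac{1}{N}\Big[ (\eta_x(t) - \eta_x(t^-))G(x/N) + (\eta_{x+1}(t) - \eta_{x+1}(t^-))G((x+1)/N) \Big],
\]
and since $\eta_x(t) - \eta_x(t^-) = -(\eta_{x+1}(t) - \eta_{x+1}(t^-))$ with $|\eta_x(t) - \eta_x(t^-)| \le \eta_x(t^-) + \eta_{x+1}(t^-)$, one gets the deterministic pathwise bound
\[
\sup_{t \ge 0}\big| \langle \pi^N_t, G \rangle - \langle \pi^N_{t^-},G \rangle \big|
\le \frac{2\|G\|_\infty}{N}\, \sup_{t\ge 0}\, \max_{x\in\mathbb T_N} \big( \eta_x(t) + \eta_{x+1}(t) \big)
\le \frac{4\|G\|_\infty}{N}\, \sup_{t\ge 0}\, \max_{x\in\mathbb T_N} \eta_x(t).
\]
Using $\max_x \eta_x(t) \le \sum_x \eta_x(t)$ and the conservation law \eqref{conslaw}, the right-hand side is at most $\tfrac{4\|G\|_\infty}{N}\sum_{x\in\mathbb T_N}\eta_x(0)$, which is a monotone function of the initial configuration.

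Taking expectations and invoking the stochastic domination $\mu_N \le \nu_{\hat\rho}$ of \cref{assum:initial_measure} (the sum $\sum_x \eta_x(0)$ being monotone), we obtain
\[
\mathbb{E}_{\mu_N}\Big[ \sup_{t\ge0} \big| \langle \pi^N_t,G \rangle - \langle \pi^N_{t^-},G \rangle \big| \Big]
\le \frac{4\|G\|_\infty}{N}\, E_{\nu_{\hat\rho}}\Big[ \sum_{x\in\mathbb T_N}\eta_x \Big]
= 4\|G\|_\infty\, E_{\nu_{\hat\rho}}[\eta_0],
\]
wait — this is $O(1)$, not $o(1)$, so the naive bound $\max_x \le \sum_x$ is too lossy. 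Instead I would keep $\max_x \eta_x(t)$ and estimate its supremum over $t$ more carefully: by attractiveness and stochastic domination, the whole trajectory $\eta(\cdot)$ is stochastically dominated by the stationary process started from $\nu_{\hat\rho}$, so it suffices to bound $E_{\nu_{\hat\rho}}[\sup_{0\le t\le T}\max_x \eta_x(t)]$. For a stationary process this can be controlled: on a fixed bounded time interval the process makes finitely many jumps on each bond at rate $O(1)$, and at stationarity $\max_x \eta_x$ has at least exponential tails (from the explicit marginals — Gamma, geometric, negative binomial), so a union bound over the $N$ sites together with a crude a priori control of the number of jumps up to time $T$ yields $E_{\nu_{\hat\rho}}[\sup_{0\le t\le T}\max_x \eta_x(t)] = O(\log N)$ or at worst $o(N)$; dividing by $N$ then gives the claim. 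Alternatively, and more cleanly, one may first truncate: fix $A$, note $\max_x\eta_x(t) \le A + \sum_x \eta_x(t)\mathbf 1\{\eta_x(t) > A\}$, bound the second term in expectation by $N\cdot E_{\nu_{\hat\rho}}[\eta_0 \mathbf 1\{\eta_0 > A\}]$ using attractiveness and the fact that $\sum_x \eta_x \mathbf 1\{\eta_x > A\}$ is monotone, so that $\tfrac1N \mathbb E_{\mu_N}[\sup_t \max_x \eta_x(t)] \le \tfrac{A}{N} + E_{\nu_{\hat\rho}}[\eta_0 \mathbf 1\{\eta_0>A\}]$; letting $N\to\infty$ and then $A\to\infty$ (using integrability of $\eta_0$ under $\nu_{\hat\rho}$, which holds by \eqref{momentsKMP}, \eqref{momentsdKMP}, \eqref{momentsHarmonic}) gives the result.

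The main obstacle is precisely the point flagged above: the single-jump size is bounded by the mass on one bond, which is only $O(1)$ in expectation but not uniformly $o(N)$ pathwise, so one cannot simply pull $\sup_t$ inside and use the conservation law — one genuinely needs a tightness/truncation argument at the level of the occupation variables, leveraging attractiveness to reduce to the explicitly known stationary measures and their thin tails. Once that reduction is made the estimate is routine. Note that this lemma is exactly what is needed to ensure that every limit point of $(\mathbb Q_N)_N$ is supported on $\mathcal C([0,T],\mathcal M_+)$, via the standard criterion that the maximal jump of $\langle \pi^N_\cdot, G\rangle$ vanishes in probability.
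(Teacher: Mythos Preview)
You set up the cancellation correctly: at a jump on the bond $(x,x+1)$ the increment is
\[
\langle \pi^N_t, G \rangle - \langle \pi^N_{t^-}, G \rangle
= \frac{1}{N}\big(\eta_x(t) - \eta_x(t^-)\big)\big(G(x/N) - G((x+1)/N)\big).
\]
The trick you miss is that the factor $G(x/N) - G((x+1)/N)$ is \emph{itself} of order $1/N$ as soon as $G\in C^1(\mathbb T)$. Writing it as $N^{-1}\nabla^+_N G(x/N)$ gives
\[
\big|\langle \pi^N_t, G \rangle - \langle \pi^N_{t^-}, G \rangle\big|
\le \frac{\|\nabla G\|_\infty}{N^2}\,\big(\eta_x(t^-)+\eta_x(t)\big)
\le \frac{2\|\nabla G\|_\infty}{N^2}\sum_{y\in\mathbb T_N}\big(\eta_y(t^-)+\eta_y(t)\big)
= \frac{4\|\nabla G\|_\infty}{N^2}\sum_{y}\eta_y(0),
\]
by the conservation law. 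Now the crude replacement of one site by the whole sum is harmless: taking expectation and using $\mu_N\le\nu_{\hat\rho}$ on the monotone function $\sum_y\eta_y(0)$ gives a bound $\tfrac{4\|\nabla G\|_\infty}{N}E_{\nu_{\hat\rho}}[\eta_0]\to 0$. One then passes from $C^1$ to $C$ by a uniform approximation $G_\varepsilon\to G$, controlling $|\langle\pi^N_t,G-G_\varepsilon\rangle|\le \|G-G_\varepsilon\|_\infty\cdot\tfrac1N\sum_y\eta_y(0)$ in the same way. This is exactly the paper's argument, and it avoids any analysis of $\sup_t\max_x\eta_x(t)$.

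Your truncation route has a genuine gap. After reducing to $\nu_{\hat\rho}$ via attractiveness, you still have a supremum over $t\in[0,T]$ for the \emph{diffusively accelerated} process, and you silently drop it when writing ``bound the second term by $N\cdot E_{\nu_{\hat\rho}}[\eta_0\mathbf 1\{\eta_0>A\}]$''. Using $\sup_t\sum_x\le\sum_x\sup_t$ leaves you with $E_{\nu_{\hat\rho}}[\sup_{0\le t\le T}\eta_0(t)\mathbf 1\{\eta_0(t)>A\}]$, which depends on $N$ through the $N^2$-acceleration: site $0$ is refreshed $O(N^2)$ times on $[0,T]$, so even with exponential-tailed marginals this quantity grows (logarithmically) in $N$ for each fixed $A$, and sending $N\to\infty$ before $A\to\infty$ does not yield $0$. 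The same issue undermines the ``$O(\log N)$'' heuristic. The extra $1/N$ from the discrete gradient of $G$ is precisely what makes the $\sup_t$ disappear for free via conservation.
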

\begin{proof}
First, we show the assertion, assuming that a test function $G$ is in the class $C^1(\mathbb T)$. 
To show the assertion, suppose that there is  an exchange of energy or a jump at time $t$ at some bond $(y,y+1)$.
Then 
\begin{equation*}
\begin{aligned}
\langle \pi^N_t,G \rangle - \langle \pi^N_{t^-},G \rangle  
= \frac{1}{N} \sum_{x \in \mathbb T_N} 
G \Big(\frac{x}{N} \Big) 
\big[ \eta_{x} (t) - \eta_{x} (t^-)\big] 
= \frac{1}{N^2} \nabla^+_N G\Big(\frac{y}{N}\Big)\big[ \eta_{y} (t^-) - \eta_y (t) \big]
\end{aligned} 
\end{equation*}
since the conserved dynamics yields $\eta_y(t^-)-\eta_y(t) = \eta_{y+1}(t) - \eta_{y+1}(t^-)$ after an exchange of energy or jump of particles. 
Above, we introduced the discrete gradient $\nabla^+_NG(y/N)=N[G((y+1)/N))-G(y/N)]$. 
Note that the last display is absolutely bounded as follows: 
\begin{equation*}
\begin{aligned}
\big|\langle \pi^N_t,G \rangle - \langle \pi^N_{t^-},G \rangle \big|
& \le \frac{1}{N^2} \Big(\max_{x\in\mathbb T_N}|\nabla_N^+G(x/N)| \Big)
\big|\eta_{y} (t^-) - \eta_y (t) \big| \\
& \le 2\| \nabla G\|_{L^\infty(\mathbb T)} \dfrac{1}{N^2} \big( \eta_{y} (t^-) + \eta_y (t)  \big).
\end{aligned}
\end{equation*}
Now, if we consider the whole sum, we can use the conservation law \eqref{conslaw}:
\begin{equation*}
\lvert \langle \pi^N_t,G \rangle - \langle \pi^N_{t^-},G \rangle \rvert \leq \frac{2\|\nabla G\|_{L^\infty(\mathbb T)}}{N^2} \sum_{y \in \mathbb{T}_N}\left( \eta_{y} (t^-) + \eta_y (t)  \right) 
\le \frac{4 \| \nabla G\|_{L^\infty(\mathbb T)}}{N^2} \sum_{y \in \mathbb{T}_N} \eta_{y}(0). 
\end{equation*} 
Note that the supremum in time at this point does not play any role and we concluded the proof thanks to Assumption \ref{assum:initial_measure}:
\begin{equation*}
\mathbb{E}_{\mu_N} \Big[ \sup_{t \ge 0 } \ 
\lvert \langle \pi^N_t,G \rangle - \langle \pi^N_{t^-},G \rangle \rvert \Big]
\le \frac{4\|\nabla G\|_{L^\infty(\mathbb T)}}{N} E_{\nu_{\hat{\rho}}} \Big[ \dfrac{1}{N}\sum_{y \in \mathbb{T}_N} \eta_{y} (0)  \Big], 
\end{equation*}
which converges to zero since $E_{\nu_{\hat{\rho}}}[\eta_y] $ is of order one for all $y \in \mathbb{T}_N$. 
Hence, the desired assertion is proved for any $G\in C^1(\mathbb T)$. 
To show that we can extend the assertion to any continuous function $G\in C(\mathbb T)$, take any $G_\varepsilon \in C^1(\mathbb T)$ such that $\| G_\varepsilon- G\|_{L^\infty(\mathbb T)} \le \varepsilon$ for any $\varepsilon>0$.   
Then, we have the bound 
\begin{equation*}
\big| \langle \pi^N_t,G\rangle - \langle \pi^N_t,G_\varepsilon \rangle \big|
\le \| G - G_\varepsilon \|_{L^\infty(\mathbb T)} 
\frac{1}{N}\sum_{x\in\mathbb T_N} \eta_x(t)
\end{equation*}
where we used the positivity of the occupation variable.
According to the conservation law and taking the expectation of last display, we conclude that we can bound that expectation  by $C\varepsilon$ for some constant $C>0$. 
Therefore, using the previous bound for $C^1(\mathbb T)$-test function, we have that 
\begin{equation*}
\begin{aligned}
\limsup_{N \to \infty}\mathbb{E}_{\mu_N} \Big[ 
\sup_{t\ge0} \big| \langle \pi^N_t,G \rangle - \langle \pi^N_{t^-},G \rangle \big| \Big] 
\le C\varepsilon,
\end{aligned}
\end{equation*}
from which we conclude the proof, since $\varepsilon>0$ is arbitrary. 
\end{proof}

\subsubsection{Absolute continuity of the limit measure}
In what follows, we will show that all limit points $\mathbb Q^*$ of the sequence of empirical measures are absolutely continuous with respect to the Lebesgue measure for all $t \in [0,T]$ with probability $1$. 
Here, let us begin by showing that the empirical measure $\pi_t$ has a density at any fixed time $t$ with probability one. 
To that end, from Proposition \ref{prop:abs_cont}, it is enough to show the following bound $\mathbb Q^*$-almost surely:
\begin{equation}
\label{eq:abs_conti_estimate}
\mathbb{Q}^* \Big( |\langle \pi_t,G \rangle| \le \hat\rho\|G\|_{L^1(\mathbb T)} \Big) 
=1  
\end{equation}
for any $t\in[0,T]$ and for any continuous function $G\in C(\mathbb T)$. Therefore, take a continuous function $G$ on $\mathbb{T}$. Since $\langle \pi^N_t,|G|\rangle$ is a monotone function, for any $\varepsilon >0$,
\begin{equation*}
\mathbb{P}_{\mu_N} \Big(   
\langle \pi^N_t,|G| \rangle - \hat\rho\| G\|_{L^1(\mathbb T)} >  \varepsilon \Big) 
\le \mathbb{P}_{\nu_{\hat\rho}} \Big(  
\langle \pi^N_t,|G| \rangle - \hat\rho \|G\|_{L^1(\mathbb T)} > \varepsilon \Big). 
\end{equation*}
Thus, by the law of large numbers, we have that 
\begin{equation*}
\limsup_{N\to\infty}
\mathbb{P}_{\nu_{\hat\rho}} \Big(   
\langle \pi^N_t,|G| \rangle- \hat\rho\|G\|_{L^1(\mathbb T)} > \varepsilon \Big) = 0. 
\end{equation*}
We observe that above we proved tightness of  the sequence $\{\mathbb Q_N\}_N$ with respect to the Skorohod topology of $\mathcal D([0,T], \mcb M_+)$.  In the previous subsection we proved that the limit point is supported on trajectories of measures that are continuous in time. Therefore, tightness also holds with respect to the uniform topology see \cite[Section 12, page 124]{billingsley2013convergence}. As a consequence,   since the set $\{ \pi^N_\cdot :  \ \langle \pi^N_t,|G| \rangle - \hat\rho\|G\|_{L^1(\mathbb T)} > \varepsilon\}$ is an open set with respect to the uniform topology,  we have
\begin{equation*}
\mathbb{Q}^* \Big(   
\langle \pi_t,|G| \rangle - \hat\rho \|G\|_{L^1(\mathbb T)} > \varepsilon \Big) 
\le \liminf_{N\to \infty} \mathbb{Q}_N \Big(  
\langle\pi_t,|G|\rangle - \hat\rho\|G\|_{L^1(\mathbb T)}  >  \varepsilon \Big) =0.
\end{equation*}
Hence, the desired assertion \eqref{eq:abs_conti_estimate} holds for any fixed time $t\in [0,T]$. 

Finally, to elevate the assertion up to now to measure-valued process, {first} note that 
\begin{equation*}
\mathbb Q^*
\Big( \sup_{t\in \mathbb Q{\cap[0,T]}} |\langle \pi_t, G \rangle| \le \hat\rho\|G\|_{L^1(\mathbb T)} 
\Big) = 1. 
\end{equation*}

Then, since the limit point $\mathbb Q^*$ is concentrated on continuous trajectories, we have that 
\begin{equation*}
\mathbb{Q}^* \Big( |\langle \pi_t,G \rangle| \le \hat\rho\|G\|_{L^1(\mathbb 
T)} , \ 0 \le t \le T\Big)=1.
\end{equation*}
Therefore, all limit points are concentrated on absolutely continuous trajectories with respect to the Lebesgue measure:
\begin{equation*}
\mathbb{Q}^*\Big( \pi ; \pi_t(du)=\rho(t,u)du, \ 0 \le t \le T \Big) =1.
\end{equation*}

\subsubsection{Density as a solution to the heat equation}
Now we finally prove that the density is a solution to the heat equation in the sense given in Definition \ref{def:weak_solu}. We give a short proof and more details can be seen e.g. in \cite{kipnis1999scaling}.
Recall \eqref{eq:Dynkin} and \eqref{diff}. Consider $G\in{C^{1,2}([0,T]\times \mathbb T})$. Therefore, from a summation by parts, we can rewrite the Dynkin martingale \eqref{eq:Dynkin} as
\begin{equation*}
M_t^{N}(G) 
= \langle \pi_t^{N}, G_t \rangle 
- \langle \pi_0^{N}, G_0\rangle 
- \int_0^t  \langle \pi_s^{N}, D \Delta_N G_s+\partial_s G_s \rangle ds.
\end{equation*}

Note that we have the bound 
\begin{equation*}
\begin{aligned}
\mathbb P_{\mu_N} \Big(
\sup_{0\le t\le T}
| M^N_t(G)| > \varepsilon \Big)
\le 4 \varepsilon^{-2} \mathbb E_{\mu_N}[M^N_T(G)^2]
= 4 \varepsilon^{-2}
\mathbb E_{\mu_N}\bigg[
\int_0^T \Upsilon^N_t(G)dt \bigg]
\end{aligned}
\end{equation*}
where we used Chebychev's and Doob's inequality in the first estimate. At this point it is enough to recall the proof of Proposition \ref{prop:tightness_key_estimates} to conclude that last term clearly vanishes as $N\to\infty$. We observe that from this result the martingale term does not contribute to the limit and thus the limiting equation is deterministic. 
Therefore, all limit points $\mathbb Q^*$ are concentrated on trajectories satisfying
\begin{equation*}
\langle \pi_t, G_t\rangle
= \langle \pi_0, G_0\rangle
+ \int_0^t \langle \pi_s, D{\Delta}G_s+\partial_s G_s\rangle ds 
\end{equation*}
for each $t\in [0,T]$ and for any $G\in C^{1,2}([0,T]\times\mathbb T)$ where we set $G_t(\cdot)=G(t,\cdot)$. 
Moreover, since the limit measure satisfies for all $t$:  $\pi_t(du)=\rho(t,u)du$ then the last display becomes exactly equal to \eqref{eq:int_sol}.

\subsection{Completion of the proof}
Now, we are in a position to complete the proof of \cref{HDL}. 
First, it is not difficult to show that all limit points of the sequence $\{\mathbb Q_N\}_{N\in\mathbb N}$ are concentrated on trajectories $\rho_0(u)du$ at time $0$. 
Thus, the previous argument assures that all limit points are concentrated on trajectories of absolutely continuous  measures with respect to the Lebesgue measure, i.e.  $\pi_t(du) = \rho(t,u) du$, where the density $\rho(t,u)$ is a weak solution of \eqref{eq:HDL_heat_equation}.
Moreover, from the uniqueness of the weak solution of the hydrodynamic equation, the convergence takes place along the full sequence, without taking any subsequence. 
Finally, since the limiting process $\{\pi_t(du): t \ge0\}$ is continuous in time, then  the projection $t\mapsto \langle \pi_t,G\rangle$ is continuous for any $G\in C(\mathbb T)$. 
Hence, by the continuous mapping theorem,  the sequence of probability measures $\{\pi^N_t\}_{N\in\mathbb N}$ converges in distribution to a deterministic measure $\rho(t,u)du$ for each fixed time $t$. 
Since convergence in distribution to a deterministic variable yields the convergence in probability, we complete the proof.

\appendix
\section{Proof of Lemma \ref{lem:qv_bound}} 
\label{sec:universal_computations}
In order to prove \cref{lem:qv_bound}, recall that we needed to compute the quadratic variation \eqref{eq:qv} of the Dynkin’s martingale. 
Here, for all models $\sigma\in\{\mathrm{gKMP},\mathrm{dKMP},\mathrm{Harm}\}$, let us give a universal feature that emerges in the computation of the carr\'e du champ $\Upsilon^{N,\sigma}_t(G)$ and deduces the bound \eqref{eq:carre_du_champs_bound}, which is used in the proof of tightness.
As a byproduct, we will give a proof of \cref{lem:qv_bound}. 
To this end, since all our dynamics are of nearest-neighbor type, we firstly note that 
\begin{equation}
\label{integrand}
\begin{aligned}
\Upsilon^{N,\sigma}_t(G)
&= \sum_{x,y \in \mathbb{T}_N} 
G\Big( \frac{x}{N}\Big) G\Big( \frac{y}{N}\Big) 
\big[ L_N^\sigma \big(\eta_x^\sigma \eta_y^\sigma\big)- 2 \eta_x^\sigma L_N^\sigma \eta_y^\sigma \big] \\ 
& =  \sum_{x \in \mathbb{T}_N} 
G\Big( \frac{x}{N}\Big)^2
\big( L_N^\sigma (\eta_x^\sigma)^2 - 2 \eta_x^\sigma L_N^\sigma \eta_x^\sigma \big) \\ 
&\quad+ 2\sum_{x \in \mathbb T_N} 
G\Big( \frac{x}{N}\Big) G\Big( \frac{x+1}{N}\Big)
\big( L_N^\sigma (\eta_x^\sigma\eta_{x+1}^\sigma) 
- \eta_x^\sigma L_N^\sigma \eta_{x+1}^\sigma 
- \eta_{x+1}^\sigma L_N^\sigma \eta_x^\sigma \big) 
\end{aligned}
\end{equation}
here in what follows, we omit the dependency on time in the occupation variable as far as there is not any confusion. 
The terms $\L_N^\sigma \eta_x^\sigma$ are considered in equation \eqref{diff}, so we are left to evaluate $L_N^\sigma (\eta_x^\sigma)^2 $ and $L_{N}^\sigma (\eta_x^\sigma \eta_{x+1}^\sigma)$ .
For the first one we have
\begin{align*}
L_N^\sigma (\eta_x^\sigma)^2 
= L_{x,x+1}^\sigma (\eta_x^\sigma)^2 
+ L_{x-1,x}^\sigma (\eta_x^\sigma)^2. 
\end{align*}
Then, we compute 
\begin{align*}
L_{x,x+1}^\sigma \eta_x^2 
&= L_{x,x+1}^\sigma \big( (\eta_x^\sigma)^2 + \eta_x^\sigma \eta_{x+1}^\sigma \big) 
- L_{x,x+1}^\sigma ( \eta_x^\sigma \eta_{x+1}^\sigma ) \\ 
& = ( \eta_x^\sigma + \eta_{x+1}^\sigma) L_{x,x+1}^\sigma \eta_x^\sigma 
- L_{x,x+1}^\sigma (\eta_x^\sigma \eta_{x+1}^\sigma ) \\ 
& = 
D_\sigma \big( (\eta_{x+1}^\sigma)^2 - (\eta_x^\sigma)^2 \big) 
-  L_{x,x+1}^\sigma 
(\eta_x^\sigma \eta_{x+1}^\sigma) .
\end{align*}
where we used {equation \eqref{pluto}} and the fact that 
$$
L_{x,x+1}^\sigma 
\big( F(\eta^\sigma)G(\eta_x^\sigma,\eta_{x+1}^\sigma)\big) 
= F(\eta^\sigma) L_{x,x+1}^\sigma G(\eta_x^\sigma,\eta_{x+1}^\sigma)
$$ 
for any $F(\eta^\sigma)$ which is a function of $\eta_y^\sigma$ with $y \neq x, x+1$ and $\eta_x^\sigma+\eta_{x+1}^\sigma$. 
Similarly, we have that 
\begin{align*}
L_{x-1,x}^\sigma (\eta_x^\sigma)^2 
= D_\sigma \big( (\eta_{x-1}^\sigma)^2 - (\eta_x^\sigma)^2 \big) 
-  L_{x-1,x}^\sigma(\eta_{x-1}^\sigma \eta_{x}^\sigma).
\end{align*}
This, together with \eqref{diff}, leads to 
\begin{equation*}
\begin{aligned}
& L_N^\sigma (\eta_x^\sigma)^2 
-2\eta_x^\sigma L_N^\sigma \eta_x^\sigma \\
&\quad= D_\sigma\big( (\eta_{x+1}^\sigma)^2 - (\eta_x^\sigma)^2 \big) 
- L_{x,x+1}^\sigma (\eta_x^\sigma \eta_{x+1}^\sigma) 
-2D_\sigma \eta_x^\sigma (\eta_{x+1}^\sigma - \eta_x^\sigma) \\
&\qquad + D_\sigma \big( (\eta_{x-1}^\sigma)^2 - (\eta_x^\sigma)^2 \big) 
- L_{x-1,x}^\sigma \eta_x^\sigma \eta_{x-1}^\sigma 
-2D_\sigma \eta_x^\sigma (\eta_{x-1}^\sigma - \eta_x^\sigma) \\
&\quad=  D_\sigma (\eta_{x+1}^\sigma - \eta_x^\sigma)^2 
- L_{x,x+1}^\sigma (\eta_x^\sigma \eta_{x+1}^\sigma) 
+ D_\sigma (\eta_{x-1}^\sigma - \eta_x^\sigma)^2 
- L_{x-1,x}^\sigma (\eta_x^\sigma \eta_{x-1}^\sigma) . 
\end{aligned}
\end{equation*}
On the other hand, for the last line of \eqref{integrand} we have that
\begin{equation*}
\begin{aligned}
& L_N^\sigma (\eta_x^\sigma\eta_{x+1}^\sigma) 
-  \eta_x^\sigma L_N^\sigma \eta_{x+1}^\sigma 
- \eta_{x+1}^\sigma L_N^\sigma \eta_{x}^\sigma \\
&\quad= L_{x,x+1}^\sigma (\eta_x^\sigma \eta_{x+1}^\sigma) 
- \eta_x^\sigma L_{x,x+1}^\sigma \eta_{x+1}^\sigma 
- \eta_{x+1}^\sigma L_{x,x+1}^\sigma \eta_x^\sigma \\
&\quad = L_{x,x+1}^\sigma (\eta_x^\sigma \eta_{x+1}^\sigma) 
- \eta_x^\sigma D_\sigma (\eta_x^\sigma - \eta_{x+1}^\sigma) 
- \eta_{x+1}^\sigma D_\sigma (\eta_{x+1}^\sigma - \eta_{x}^\sigma) \\
&\quad= L_{x,x+1}^\sigma (\eta_x^\sigma \eta_{x+1}^\sigma) 
-D_\sigma (\eta_x^\sigma - \eta_{x+1}^\sigma)^2
\end{aligned}
\end{equation*}
where in the second identity we used the fact that 
$$ 
L_{x,x+1}^\sigma \eta_{x+1}^\sigma 
= L_{x,x+1}^\sigma ( \eta_x^\sigma + \eta_{x+1}^\sigma - \eta_x^\sigma ) 
= -  L_{x,x+1}^\sigma \eta_x^\sigma 
= D_\sigma (\eta_x^\sigma - \eta_{x+1}^\sigma) .
$$ 
This allows us to rewrite the integrand part of the quadratic variation as
\begin{align*}
\Upsilon_t^{N,\sigma}(G) 
= \frac{1}{N^2}\sum_{x \in \mathbb{T}_N} 
\big( \nabla^+_N G(x/N) \big)^2 
\big[ D_\sigma ( \eta_x^\sigma - \eta_{x+1}^\sigma)^2 
- L_{x,x+1}^\sigma (\eta_x^\sigma \eta_{x+1}^\sigma) \big]. 
\end{align*}
Now, we are left to compute the quantity $L^\sigma_{x,x+1}(\eta^\sigma_x \eta^\sigma_{x+1})$.   
Here, as we shall see below, we claim that for each model $\sigma$, we have the following bound:  
\begin{equation}
\label{eq:carre_du_champ_estimate_key_bound}
D_\sigma( \eta_x^\sigma - \eta_{x+1}^\sigma)^2 
- L_{x,x+1}^\sigma (\eta_x^\sigma\eta_{x+1}^\sigma) 
\le D_\sigma \big((\eta_x^\sigma)^2 + (\eta_{x+1}^\sigma)^2 \big) .
\end{equation}
As a consequence 
\begin{equation*}
\begin{aligned}
\Upsilon_t^{N,\sigma}(G)  
& = \frac{1}{N^2} \sum_{x \in \mathbb{T}_N} 
\big( \nabla^+_N G(x/N)\big)^2 
\big[D_\sigma( \eta_x^\sigma - \eta_{x+1}^\sigma)^2 
- L_{x,x+1}^\sigma(\eta_x^\sigma \eta_{x+1}^\sigma) \big]  \\ 
&\le \frac{D_\sigma}{N^2}\sum_{x \in \mathbb{T}_N} 
\big( \nabla^+_N G(x/N)\big)^2 
\big[ (\eta_x^\sigma)^2 + (\eta_{x+1}^\sigma)^2\big] 
\le \frac{C}{N^2} \sum_{x \in \mathbb{T}_N} (\eta_x^\sigma)^2
\end{aligned}
\end{equation*}
with some constant $C=C(G)>0$. It now remains to  prove the claim i.e. the key bound \eqref{eq:carre_du_champ_estimate_key_bound} for each model $\sigma\in\{ \mathrm{gKMP},\mathrm{dKMP},\mathrm{Harm}\}$. This ends the proof of the lemma. 
Below, let us give a proof of the estimate \eqref{eq:carre_du_champ_estimate_key_bound} separately for each model.

\subsection{Generalized KMP}
For the generalized KMP model $\eta=\eta^{\mathrm{gKMP}}$, note that 
\begin{equation*}
L_{x,x+1}^{\mathrm{gKMP}} (\eta_x \eta_{x+1})
= (\eta_x + \eta_{x+1})^2 I(\mathfrak s) - \eta_x \eta_{x+1} 
\end{equation*}
where, recalling the definition of $\gamma_{\mathfrak s}(u)$ given in \eqref{eq:dkmp_redistribution_weight} and $I(\mathfrak s)$ is defined by 
\begin{equation*}
I(\mathfrak s) = \int_0^1 \gamma_{\mathfrak s}(u)u(1-u)du
= \dfrac{\Gamma(2\mathfrak s+1) \Gamma(2\mathfrak s+1)}{\Gamma(4\mathfrak s +2)}
\frac{\Gamma(4\mathfrak s)}{\Gamma(2\mathfrak s)\Gamma(2\mathfrak s)}
= \frac{s}{4\mathfrak s+1} . 
\end{equation*}
Thus, recalling $D_{\mathrm{gKMP}}=1/2$, we get the bound 
\begin{equation*}
\begin{aligned}
D_{\mathrm{gKMP}} ( \eta_x - \eta_{x+1})^2 
- L_{x,x+1}^{\mathrm{gKMP}}(\eta_x \eta_{x+1})
& = (D_{\mathrm{gKMP}} - I(\mathfrak s)) (\eta_x^2 + \eta_{x+1}^2) \\
& \le D_{\mathrm{gKMP}} (\eta_x^2 + \eta_{x+1}^2) .  
\end{aligned}
\end{equation*}

\subsection{Discrete KMP}
Next, for discrete KMP $\eta=\eta^{\mathrm{dKMP}}$ on the configuration space $\Omega_N^{\mathrm{dKMP}}=\mathbb N_0^{\mathbb T_N}$, we get that 
\begin{equation*}
\begin{aligned}
L_{x,x+1}^{\mathrm{dKMP}} (\eta_x \eta_{x+1})
&= \frac{1}{\eta_x + \eta_{x+1} +1}
\sum_{r=0}^{\eta_x + \eta_{x+1}} r (\eta_x + \eta_{x+1} -r) - \eta_x \eta_{x+1} \\
&= \frac{\eta_x^2}{6} + \frac{\eta_{x+1}^2}{6} - \dfrac{2}{3} \eta_x \eta_{x+1} - \dfrac{1}{6} \eta_x - \dfrac{1}{6}  \eta_{x+1}. 
\end{aligned}
\end{equation*}
This means, recalling from \eqref{eq:diffusion_coefficient_list} that $D_{\mathrm{dKMP}}=1/2$, we have the desired bound 
\begin{equation*}
D_{\mathrm{dKMP}} (\eta_x - \eta_{x+1})^2 
- L_{x,x+1}^{\mathrm{dKMP}} (\eta_x \eta_{x+1}) 
\le D_{\mathrm{dKMP}} (\eta_x^2 + \eta_{x+1}^2) 
\end{equation*}
where we used the trivial bound $\eta_x\le \eta_x^2$, which holds for particle-type configurations.

\subsection{Harmonic model}
Finally, for the Harmonic model $\eta=\eta^{\mathrm{Harm}}$ we can also compute directly the action
$L_{x,x+1}^{\mathrm{Harm}} (\eta_x\eta_{x+1})$ as follows. 
Note here that we have the following identities: 
\begin{equation}
\label{ident_1}
\sum_{k=1}^{n} \dfrac{\Gamma(n+1)\Gamma(a-k+n)}{\Gamma(a+n)\Gamma(n-k+1)} = \dfrac{n}{a}.  
\end{equation}
and 
\begin{equation}
\label{eq:harmonic_model_useful_identity2}
\sum_{k=1}^{n} \dfrac{k\Gamma(n+1)\Gamma(a-k+n)}{\Gamma(a+n)\Gamma(n-k+1)} = \dfrac{(a+n)n}{a(a+1)},
\end{equation} which hold for any $n\in\mathbb N$ and $a>0$. 
Indeed, the last identities can be shown via some elementary computation for the Beta-binomial distribution.
Here recall that probability mass function of the Beta-binomial distribution with parameters $\alpha_0,\beta_0>0$ is given by
\begin{equation*}
\binom{n}{k} \frac{B(k+\alpha_0, n-k+\beta_0)}{B(\alpha_0,\beta_0)}, 
\end{equation*}
for each $k=0,\ldots,n$ and that its mean is given by $n\alpha_0/(\alpha_0+\beta_0)$. 
Then, for \eqref{ident_1}, we note that   
\begin{equation*}
\begin{aligned}
\sum_{k=1}^{n} \dfrac{\Gamma(n+1)\Gamma(a-k+n)}{\Gamma(a+n)\Gamma(n-k+1)} 
&=(n+a)\sum_{k=1}^n \binom{n}{k} B(k+1,n-k+a)\\
&=(n+a)\Big( \sum_{k=0}^n \binom{n}{k} B(k+1,n-k+a)
- B(1,n+a) \Big)\\
&= (n+a)(B(1,a)- B(1,n+a) )\\
&= (n+a) \Big( \frac{1}{a} - \frac{1}{n+a} \Big) 
\end{aligned}
\end{equation*}
where we used the fact that the Beta-binomial distribution is a probability distribution. 
On the other hand, for \eqref{eq:harmonic_model_useful_identity2}, we note that 
\begin{equation*}
\begin{aligned}
\sum_{k=1}^{n} \dfrac{k\Gamma(n+1)\Gamma(a-k+n)}{\Gamma(a+n)\Gamma(n-k+1)} 
&=(n+a)\sum_{k=1}^n k\binom{n}{k} B(k+1,n-k+a)\\
&=(n+a)B(1,a) \frac{n}{a+1} 
= \frac{n(n+a)}{a(a+1)}. 
\end{aligned}
\end{equation*}
where we used the formula for the mean of the Beta-binomial distribution.  
Now, recalling $D_{\mathrm{Harm}}=1/(2\mathfrak s)$, we have that
\begin{equation*}
\begin{aligned}
&D_{\mathrm{Harm}} (\eta_x - \eta_{x+1})^2 
- L_{x,x+1}^{\mathrm{Harm}}(\eta_x \eta_{x+1}) \\
&\quad= \frac{1}{2\mathfrak s} (\eta_x - \eta_{x+1})^2 
- \sum_{k=1}^{\eta_x} \frac{\Gamma(\eta_{x} +1) \Gamma(\eta_{x} -k +2\mathfrak s)}{\Gamma(\eta_{x}  +2\mathfrak s)\Gamma(\eta_{x}-k+1)} 
(\eta_x - \eta_{x+1} -k)  \\
&\qquad- \sum_{k=1}^{\eta_{x+1}} \dfrac{\Gamma(\eta_{x} +1) \Gamma(\eta_{x+1} -k +2\mathfrak s)}{\Gamma(\eta_{x+1}  +2\mathfrak s)\Gamma(\eta_{x+1} -k+1)} 
(\eta_{x+1} - \eta_{x} -k) \\
&\quad= \dfrac{1}{2\mathfrak s(2\mathfrak s+1)} ( \eta_x^2 + \eta_{x+1}^2 + 2\mathfrak s \eta_x + 2\mathfrak s\eta_{x+1})
\le \frac{1}{2\mathfrak s}(\eta_x^2 + \eta_{x+1}^2) \;.
\end{aligned}
\end{equation*}
The last estimate follows from the trivial bound for particle-type occupation variables: $\eta_x\le \eta_x^2$. 
Thus, as in the other cases, we have the desired bound
\begin{equation*}
D_{\mathrm{Harm}} (\eta_x - \eta_{x+1})^2 
- L_{x,x+1}^{\mathrm{Harm}} (\eta_x \eta_{x+1}) 
\le D_{\mathrm{Harm}} (\eta_x^2 + \eta_{x+1}^2) .
\end{equation*}

\section{Absolute continuity}

\begin{prop}\label{prop:abs_cont}
    Suppose $\pi \in \mcb{M}_+$ satisfies $|\langle \pi,G \rangle| \le \hat\rho\|G\|_{L^1(\mathbb 
T)}$ for any $G \in C(\mathbb T)$. Then $\pi$ is absolutely continuous with respect to the Lebesgue measure.
\end{prop}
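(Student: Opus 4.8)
The plan is to show directly that $\pi$ assigns zero mass to every Lebesgue-null subset of $\mathbb T$, from which absolute continuity is immediate. Write $|\cdot|$ for Lebesgue measure on $\mathbb T$. Fix a Borel set $A\subseteq\mathbb T$ with $|A|=0$. By outer regularity of Lebesgue measure, for each $\varepsilon>0$ there is an open set $U\supseteq A$ with $|U|<\varepsilon$. Since $\pi$ is a non-negative measure, $\pi(A)\le\pi(U)$, so it suffices to control $\pi(U)$ for open sets $U$.

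The key estimate is $\pi(U)\le\hat\rho\,|U|$ for every open $U\subseteq\mathbb T$. Because $\mathbb T$ is a compact metric space and $U$ is open, $\mathbf 1_U$ is lower semicontinuous and is therefore the pointwise non-decreasing limit of continuous functions $G_n\in C(\mathbb T)$ with $0\le G_n\le 1$; for instance one may take $G_n(u)=\min\{1,\,n\,\mathrm{dist}(u,\mathbb T\setminus U)\}$. Applying the monotone convergence theorem both for $\pi$ and for Lebesgue measure, and using the hypothesis $\langle\pi,G_n\rangle\le\hat\rho\|G_n\|_{L^1(\mathbb T)}$ (legitimate because $G_n\ge0$), we obtain
\begin{equation*}
\pi(U)=\lim_{n\to\infty}\langle\pi,G_n\rangle
\le\hat\rho\lim_{n\to\infty}\int_{\mathbb T}G_n(u)\,du=\hat\rho\,|U|.
\end{equation*}
Combining with the previous paragraph gives $\pi(A)\le\pi(U)\le\hat\rho\,\varepsilon$ for all $\varepsilon>0$, hence $\pi(A)=0$. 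Since $A$ was an arbitrary null set, $\pi$ is absolutely continuous with respect to Lebesgue measure, and by the Radon--Nikodym theorem $\pi(du)=\rho(u)\,du$ for some $\rho\in L^1(\mathbb T)$.

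An equivalent route is to observe that the hypothesis says $G\mapsto\langle\pi,G\rangle$ is a linear functional on $C(\mathbb T)$ bounded for the $L^1$-norm; it extends by density to a bounded linear functional on $L^1(\mathbb T)$, which under the identification $(L^1(\mathbb T))^*=L^\infty(\mathbb T)$ is represented by some $\rho\in L^\infty(\mathbb T)$ with $\|\rho\|_{L^\infty(\mathbb T)}\le\hat\rho$, and the Riesz--Markov uniqueness of the measure representing a functional on $C(\mathbb T)$ then forces $\pi(du)=\rho(u)\,du$. Either way the argument is standard measure theory and presents no genuine obstacle; the only point requiring a little care is the monotone approximation of $\mathbf 1_U$ by continuous functions, i.e.\ the use of lower semicontinuity of indicators of open sets.
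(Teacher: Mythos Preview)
Your proof is correct. Your first argument follows essentially the same route as the paper: both approximate indicator functions by continuous ones to deduce $\pi(U)\le\hat\rho\,|U|$ for open sets $U$, with the minor difference that you approximate $\mathbf 1_U$ from \emph{below} (via $G_n(u)=\min\{1,n\,\mathrm{dist}(u,\mathbb T\setminus U)\}$) and invoke monotone convergence, whereas the paper approximates from \emph{above} using enlarged closed neighbourhoods of $\overline U$ and then shrinks them. The duality argument you sketch at the end is a genuinely cleaner alternative that the paper does not use; beyond absolute continuity it also delivers the extra information $\rho\in L^\infty(\mathbb T)$ with $\|\rho\|_{L^\infty}\le\hat\rho$.
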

\begin{proof}
To show the absolute continuity of the measure $\pi$, it is enough to show the following: for any $\varepsilon>0$, there exists some $\delta>0$ such that for any family of disjoint open intervals $\{(a_k,b_k)\}_{k=1,\ldots,m}$ we have $\pi(A)<\varepsilon$ provided $|A|< \delta$, where we set $A=\bigcup_{k=1}^m (a_k,b_k)$ and $|\cdot|$ stands for the Lebesgue measure.   
In what follows, let $F$ be the closure of the open set $A$ and let us take an approximation of $F$ by closed sets $\{F_n\}_{n\in\mathbb N} \subset \mathbb T$ at level $n\in\mathbb N$, i.e., 
\begin{equation*}
F_n = \{ x\in\mathbb T : \inf_{y\in F}|x-y| \le 1/n\}.    
\end{equation*}
Then, let $g_n:\mathbb T\to[0,1]$ be a continuous such that $g_n(x)=1$ when $x\in F_n$, whereas $g_n(x)=0$ for $x\in F_n^c$. 
Notice that 
\begin{equation*}
\begin{aligned}
\pi(A) 
\le \pi(F)
=\langle\pi,\mathbf{1}_F\rangle
\le \langle \pi, g_n \rangle
\le \hat\rho \| g_n\|_{L^1(\mathbb T)}
\le \hat\rho|F_n|
\end{aligned}
\end{equation*}
where we used the assumption in the second estimate.  
Now, noting $F_n \searrow F$, we have that the utmost left-hand side of the last display is in fact bounded from above by $\hat\rho|F|=\hat\rho|A|$, and thus we conclude the proof. 
\end{proof}

\subsection*{Acknowledgments}
P.G. and C.F. thank the hospitality of the University of Tokyo for their research visit during the period October-November 2024 when part of this work was developed. P.G. expresses warm thanks to Funda\c c\~ao para a Ci\^encia e Tecnologia FCT/Portugal for financial support through the projects UIDB/04459/2020, UIDP/04459/2020 and ERC/FCT. M.S. is supported by KAKENHI 24K21515. C.F. acknowledges financial support by the Istituto Nazionale di Alta Matematica through the program “Borse di studio per l'estero”.

\subsection*{Data Availability}
No datasets were generated or analyzed during the current study. 

\subsection*{Conflict of Interests}
The authors declare that they have no known competing financial interests or personal relationships that could have appeared to influence the work reported in this paper.

\bibliographystyle{abbrv}
\bibliography{ref}

\end{document}